\newtheorem{thm}{Theorem}[section]
\newtheorem{lem}[thm]{Lemma}
\theoremstyle{definition}           
\theoremstyle{remark}
\newtheorem{rem}[thm]{Remark}
\newtheorem{teo}[thm]{Theorem}
\title{Numerical scheme for stochastic differential equations driven by fractional Brownian motion with $ 1/4<H <1/2$.}
\author{ H\'ector Araya$^{1}$ \ \ \ \ Jorge A. Le\'on$^{2}$ \ \ \ \ Soledad Torres$^{3}$  \\
\small $^{1}$ CIMFAV, Facultad de Ingenier\'ia, Universidad de Valpara\'iso,\\
Casilla 123-V, 4059 Valpara\'iso, Chile.\\
  hector.araya@postgrado.uv.cl\\ 
\small $^{2}$ Depto. de Control Autom\'atico, Cinvestav-IPN, \\ 
Apartado Postal 14-740, Ciudad de M\'exico, 07000, M\'exico. \\
jleon@ctrl.cinvestav.mx \\
  \small $^{3}$CIMFAV, Facultad de Ingenier\'ia, Universidad de Valpara\'iso,\\
Casilla 123-V, 4059 Valpara\'iso, Chile.\\
soledad.torres@uv.cl\\
}
\begin{document}
\maketitle

\abstract{In this article, we study  a numerical scheme for stochastic differential equations driven by fractional Brownian motion with Hurst parameter $ H \in \left( 1/4, 1/2 \right)$. Towards this end, we apply Doss-Sussmann representation of the solution and an approximation of this representation using a first order Taylor expansion. The obtained rate of convergence is  $n^{-2H +\rho}$, for $\rho$ small enough.}


\textbf{ Key words}: Doss-Sussmann representation, fractional Brownian motion, stochastic differential equation, Taylor expansion. 

\section{Introduction}

In this article we are interested in a pathwise approximation of the solution to the stochastic differential equation 
\begin{equation}\label{eqest}
X_{t} = x + \int_{0}^{t} b(X_{s})ds + \int_{0}^{t} \sigma(X_{s}) \circ dB_{s},  \quad  t \in [0,T], 
\end{equation}
where $x \in \mathbb{R}$ and $b, \sigma : \mathbb{R} \rightarrow \mathbb{R} $ are measurable functions. The stochastic integral in (\ref{eqest}) is understood in the sense of Stratonovich, (see  Al{\`o}s et.al. \cite{alos1} for details) and $B= \lbrace  B_{t} , t \in [0,T] \rbrace$ is a fractional Brownian motion (fBm)  with Hurst parameter $H \in (1/4,1/2)$. $B$ is a centered Gaussian process with a covariance structure given by
\begin{equation}\label{covariance}
\mathbb{E}\left( B_{t} B_{s} \right) = {1 \over 2}\left(  t^{2H} + s^{2H} - \vert t-s \vert^{2H} \right), \quad  t \in [0,T]. 
\end{equation}

 In \cite{alos1}, the existence and uniqueness for the solution of equation  (\ref{eqest}) have been established under suitable conditions, which follows  from our assumption (see hypothesis (H) in Section \ref{doss-sussmann}).

Equation (\ref{eqest}) has been analyzed by several authors, for different interpretations of stochastic integrals, because of the properties 
of fractional Brownian motion $B$  . Among these properties, we can mention
self-similarity, stationary increments, $\rho$-H\"older continuity, for any
$\rho\in(0,H)$, and the covariance of its increments on intervals decays
 asymptotically as a negative power of the distance between the intervals.
Therefore, equation (\ref{eqest}) 
 becomes quite  useful in applications in different areas  such as physics, biology, finance, etc (see, e.g., \cite{Alos2007, kaj2008, KLUP}). Hence, it is important to provide approximations to the solution of (\ref{eqest}).

For  $H=1/2$ (i.e., B is a Brownian motion), a large number of numerical schemes to approximate the unique solution of (\ref{eqest}) has been considered in the literature. The reader can consult  Kloeden and Platen  \cite{klo} (and the references therein), for a complete  exposition of this topic. In particular, Talay  \cite{Talay} introduces the Doss-Sussmann transformation  \cite{doss, sussmann} in the study of numerical methods to the solution of stochastic differential equations (see Section \ref{doss-sussmann} for the definition of this transformation).

For $H>1/2$, numerical schemes for equation (\ref{eqest}) have been analyzed by several authors. For instance, we can mention    \cite{araya, hu, mish1}  and \cite{nourdin}, where the stochastic integrals is interpreted as the extension of the Young integral given in \cite{za} and the forward integral, respectively. It is well-known that these integrals agree with the Stratonovich one under suitable conditions (see  Al{\`o}s and Nualart \cite{alos2}).  
 
In this paper we are interested in the case $H<1/2$, because numerical schemes for the solution to (\ref{eqest})  have been studied only in some particular situations. Namely, Garz\'on et. al. \cite{garzon}  use the Doss-Sussmann transformation in order to prove the convergence for the Euler scheme associated to (\ref{eqest}) by means of an approximation of fBm via fractional transport processes. In \cite{nourdin}, the authors also take advantage of the Doss-Sussmann transformation in order to discuss the Crank-Nicholson method, for $ H \in \left(1/6 , 1/2\right)$ and $b\equiv0$. Here,   they show convergence in law of the error to a random variable, which depends on the solution of the equation and  an independent Gaussian random variable. Specifically, the authors state that the rate of convergence of the scheme is of order $n^{1/2-3H}$. In \cite{tindel1} the authors consider the so-called modified Euler scheme for multidimensional stochastic differential equations  driven by fBm with $ H \in \left(1/3 , 1/2\right)$. They utilize rough paths techniques in order to obtain the convergence rate of order $n^{1/2-2H}$. Also, they prove that this rate is sharp. In \cite{deya}  a numerical scheme for  stochastic differential equations  driven by a multidimensional fBm with Hurst parameter greater than $1/3$ is introduced. The method is based on a second-order Taylor expansion, where the L\'evy area terms are replaced by products of increments of the driving fBm. Here, the order of convergence is $n^{-(H-\rho)}$, with $\rho \in \left( 1/3 , H \right)$. In order to get this rate of convergence, the authors use  a combination of rough paths techniques and error bounds for the discretization of the L\'evy area terms.

In this work we propose an approximation scheme  for the solution to (\ref{eqest}) with $H \in \left( 1/4 , 1/2 \right)$. To do so, we use a first order Taylor expansion in the Doss-Sussmann representation of the solution. We consider the case $H \in \left( 1/4 , 1/2 \right)$ because it is showed in
 \cite{alos2}  that the solution of (\ref{eqest}) is given by this transformation. However,  even in the case $\left( 0 , 1/4 \right)$, our scheme tends to the mentioned transformation. The rate of convergence in this paper is $ n^{- 2H  + \rho}$, where $\rho <  2H$ small enough, improving the ones given in  \cite{nourdin}, \cite{Talay}, \cite{deya} and \cite{tindel1}. Also  our rate is better than the one obtained in  \cite{garzon} when the fBm is not approximated  by means of fractional transport process.  
We observe that our method only establishes this rate of convergence for $H<1/2$ because we could only see that the
auxiliary  inequality (\ref{dify1}) below  is satisfied in this case. However, the same construction holds for $H>1/2$ (see \cite{nourdin},  Proposition 1). In this case, the rate of convergence for the scheme is not the same as  the case  $1/4 < H < 1/2$. In fact, for $H>1/2$, we only get that 
 the rate of convergence  is $n^{-1 + \rho}$ for $\rho$ small enough.

The  paper is organized as follows: In Section \ref{sec2} we introduce the notations needed in this article. In particular, we explain the Doss-Sussmann-type transformation related to the unique solution to (\ref{eqest}). Also, in this section, the scheme is presented and the main result is stated (Theorem \ref{teo1} below). In Section \ref{sec3}, we establish the auxiliary lemmas, which are needed to show, in Section \ref{sec4}, that the main result is true. The proof of the auxiliary lemmas  are presented in Section \ref{sec5}. Finally, in the Appendix  (Section  \ref{apen}), other auxiliary result is also studied because it is a general result concerning  the Taylor expansion for some continuous functions.

\section{Preliminaries and main result} \label{sec2}
In this  section, we  introduce the basic notions  and the framework that we use in this paper. That is, we first  describe the
 Doss-Sussmann transformation given in Doss \cite{doss} and Sussmann  \cite{sussmann}, which is the link
between  the stochastic  and  ordinary differential equations (see  Al\`os et al. \cite{alos1}, or Nourdin and Neuenkirch \cite{nourdin},  for  fractional Brownian motion case). Then,  we provide a numerical method and its rate of convergence  for the unique solution of (\ref{eqest}). These are
 the main result of this article (see Theorem \ref{teo1}).

\subsection{Doss-Sussmann transformation}\label{doss-sussmann}

Henceforth, we consider the stochastic differential equation

\begin{equation}\label{eqest2}
X_{t} = x + \int_{0}^{t} b(X_{s})ds + \int_{0}^{t} \sigma(X_{s}) \circ dB_{s}, \quad  t \in [0,T], 
\end{equation}
where $B=\{B_t:t\in[0,T]\}$ is a fractional Brownian motion with Hurst parameter $1/4 < H < 1/2$, $x \in \mathbb{R}$ and the stochastic integral in (\ref{eqest2}) is understood in the sense of Stratonovich, which is introduced  in \cite{alos1}.  Remember that $B$
is defined in (\ref{covariance}). The coefficients  $b,\sigma:\mathbb{R}\rightarrow\mathbb{R}$ are measurable functions  such that
\begin{itemize}
\item [(H)] $b \in C^{2}_{b}(\mathbb{R})$ and  $\sigma \in C^{2}_{b}(\mathbb{R})$.
\end{itemize}
\begin{rem} \label{cotas}
By assumption(H), we have, for $z \in \mathbb{R}$,  
\begin{itemize}
\item $\vert  b(z) \vert \leq M_{1} $,  $\vert  b'(z) \vert \leq M_{4} $ and $\vert  b''(z) \vert \leq M_{6} $.
\item $\vert  \sigma (z) \vert \leq M_{5} $,  $\vert  \sigma ' (z) \vert \leq M_{2} $ and $\vert  \sigma '' (z) \vert \leq M_{3} $.
\end{itemize} 
We explicitly give these constants so that it will be  clear where   we use them  in our analysis.
\end{rem}

Now, we explain the relation between (\ref{eqest2}) and ordinary differential equations: the so call Doss-Sussmann transformation.

In Al\`os et al. (Proposition 6) is proven that the equation (\ref{eqest2}) has a unique solution of the form
\begin{equation}\label{sol}
X_{t} = \phi \left(Y_{t}, B_{t} \right).
\end{equation}
The function  $\phi:\mathbb{R}^2\rightarrow\mathbb{R}$ is  the solution of the ordinary differential equation
\begin{eqnarray}\label{phiori}
{\partial \phi  \over \partial \beta }(\alpha, \beta) &= &\sigma(\phi(\alpha, \beta)),\quad \alpha, \  \beta \in \mathbb{R},\nonumber\\
\phi(\alpha , 0) &=& \alpha  ,
\end{eqnarray}
and  the process $Y$ is the pathwise solution to the equation 	
\begin{equation*}\label{y1}
Y_{t} = x + \int_{0}^{t} \left( {\partial \phi  \over \partial \alpha } (Y_{s}, B_{s})   \right)^{-1} b \left( \phi (Y_{s}, B_{s} ) \right) ds,\quad
t\in[0,T].
\end{equation*}
By Doss \cite{doss}, we have
\begin{equation}\label{eq:phi}
{\partial \phi  \over \partial \alpha }(\alpha, \beta) = exp \left( \int_{0}^{\beta} \sigma'(\phi(\alpha, s )) ds  \right),
\end{equation}
which implies 
\begin{equation}\label{y2}
Y_{t} = x + \int_{0}^{t}  \exp \left( -\int_{0}^{B_{s}} \sigma'(\phi(Y_{s}, u )) du \right)    b \left( \phi (Y_{s}, B_{s} ) \right) ds.
\end{equation}

\subsection{Numerical Method}
In this section, we describe our numerical scheme associated to the unique solution of  (\ref{eqest2}). Towards this end,  in Section \ref{sec:2.2.1},
we first  propose an approximation to the function $\phi$ given in (\ref{eq:phi}), and then, in Section \ref{sec:2.2.2} we approximate the process $Y$.
In both sections we suppose that (H) holds.

\subsubsection{Approximation of $\phi$}\label{sec:2.2.1}
Note that, for $x\in\mathbb{R}$,  equation (\ref{phiori}) has the form
\begin{equation}\label{phi}
\phi(x,u) = x + \int_{0}^{u} \sigma(\phi(x,s))ds.
\end{equation}
For each $l\in\mathbb{N}$, we take the partition $\left\lbrace  u_{i}^{l} , i\in\{-l,\ldots, l\}\right\rbrace$ of the interval $[- \Vert B\Vert_{\infty}, \Vert B\Vert_{\infty}]$ given by $-\Vert B \Vert_{\infty}=u^{l}_{-l} < \ldots <u^{l}_{-1}< u^{l}_{0}=0< u^{l}_{1}< \ldots < u^{l}_{l} = \Vert B \Vert_{\infty}$. Here,  $ \Vert B \Vert_{\infty} =\sup_{t\in[0,T]}|B_t|$,
\begin{equation}
u^{l}_{i+1} = u^{l}_{i} + {\Vert B \Vert_{\infty} \over l} = {(i+1)\Vert B \Vert_{\infty} \over l},\quad u^{l}_{-(i+1)} = u^{l}_{-i} - 
{\Vert B \Vert_{\infty} \over l} =  -{(i+1)\Vert B \Vert_{\infty} \over l}. \nonumber 
\end{equation}

 Let $x\in\mathbb{R}$ be given in (\ref{y2}).  Set
\begin{equation}\label{M}
M:= \vert x \vert + T\left( M_{1}\exp(M_{2} \Vert B \Vert _{\infty}) + \Vert B\Vert_{H-\rho}C_{3}T^{H-\rho} \right) ,
\end{equation}
where $\rho\in(0,H)$,  $\Vert B\Vert_{H-\rho}$ is the $(H-\rho$)-H\"older norm of $B$ on $[0,T]$,
$$C_{3}=  M_{1}M_{2} \exp \left(M_{2}\Vert B \Vert_{\infty} \right) +  M_{4} \exp \left(M_{2} \Vert B \Vert_{\infty} \right) M_{5}
\Vert B \Vert_{\infty}\left( 1 +M_2  \right)
$$
and
 $M_i$, $i\in\{1,\ldots,6\}$ are defined in Remark \ref{cotas}.

Now, we define the function $\phi^{l} : \mathbb{R}^{2} \rightarrow \mathbb{R}$ by
\begin{equation}\label{phi0}
\phi^{l}(z,u) = 0  \ \ \  \mbox{if}  \ \ \ (z,u) \not\in [-M,M] \times [-\Vert B \Vert_{\infty},\Vert B \Vert_{\infty}];
\end{equation}
and, for $k=1, \ldots ,l$,
\begin{equation}\label{phin1}
\phi^{l}(z,u) = \phi^{l}(z,u^{l}_{k-1}) + \int_{u_{k-1}^{l}}^{u} \sigma \left(\phi^{l}(z,u^{l}_{k-1}) + (s-u_{k-1}^{l}) \sigma \left( \phi^{l}(z,u^ {l}_{k-1}) \right) \right) ds,
\end{equation}
if $z\in[-M,M]$ and  $u \in (u^{l}_{k-1} ,u^{l}_{k} ]$, with
\begin{equation}\label{phinicial}
\phi^{l}(z,u^{l}_{0}) = z,\quad\hbox{ if}\ \  z\in[-M,M].
\end{equation}
The definition of $\phi^l$ for the case $k=-l,\ldots, 0$ is similar. That is,
\begin{equation}\label{phin1n}
\phi^{l}(z,u) = \phi^{l}(z,u^{l}_{k}) -\int^{u_{k}^{l}}_{u} \sigma \left(\phi^{l}(z,u^{l}_{k}) + (s-u_{k}^{l}) \sigma \left( \phi^{l}(z,u^ {l}_{k}) \right) \right) ds,
\end{equation}
if $z\in[-M,M]$ and  $u \in [u^{l}_{k-1} ,u^{l}_{k} )$.

Also,  we consider the  function $\Psi^l: \mathbb{R}^{2} \rightarrow \mathbb{R}$, which is equal to  
\begin{equation}\label{psi0}
\Psi^{l}(z,u) = 0  \ \ \  \mbox{if}  \ \ \ (z,u) \not\in [-M,M] \times [-\Vert B \Vert_{\infty},\Vert B \Vert_{\infty}], 
\end{equation}
and, for $k=1, \ldots ,l$,
\begin{eqnarray}
\Psi^{l}(z,u) &=&  \Psi^{l}(z,u^{l}_{k-1}) + \int_{u_{k-1}^{l}}^{u} \left( \sigma \left(  \Psi^{l}(z,u^{l}_{k-1})\right) +  \sigma' \sigma \left( \Psi^{l}(z,u^{l}_{k-1}) \right) (s-u_{k-1}^{l}) \right) ds \nonumber \\  
& = &  \Psi^{l}(z,u^{l}_{k-1}) + (u-u_{k-1}^{l}) \left( \sigma \left(\Psi^{l}(z,u^{l}_{k-1})\right) + \sigma' \sigma \left( \Psi^{l}(z,u^{l}_{k-1}) \right) {(u-u_{k-1}^{l}) \over 2 } \right) \nonumber \\ \label{psin}
\end{eqnarray}
if $z\in[-M,M]$ and  $u \in (u^{l}_{k-1} ,u^{l}_{k} ]$, with
\begin{equation*}\label{psinicial}
\Psi^{l}(z,u^{l}_{0}) = z, \quad\hbox{ if}\ \  z\in[-M,M].
\end{equation*}
For    $k=-l,\ldots, 0$, $\Psi^{l}$ is introduced as
$$
\Psi^{l}(z,u) =  \Psi^{l}(z,u^{l}_{k}) - \int^{u_{k}^{l}}_u \left( \sigma \left(  \Psi^{l}(z,u^{l}_{k})\right) +  \sigma' \sigma \left( \Psi^{l}(z,u^{l}_{k}) \right) (s-u_{k}^{l}) \right) ds ,
$$
If  $z\in[-M,M]$ and  $u \in[u^{l}_{k-1} ,u^{l}_{k} ]$.
From  equation (\ref{psin}) and last equality,  it can be seen that
 $\Psi^l(z, \cdot)$ is continuous on $[-\Vert B \Vert_{\infty},\Vert B \Vert_{\infty}]$.

We remark that the function $\phi^l$ given in (\ref{phin1}) and (\ref{phin1n}) is an auxiliary tool that allows us to use Taylor's theorem in the analysis of the numerical scheme proposed in this paper (i.e., Theorem \ref{teo1}). Indeed, the Taylor's
theorem is utilized in Lemma \ref{lemapsi1}.
\subsubsection{Approximation of $Y$} \label{sec:2.2.2}
Here,  we approximate the  solution of  equation (\ref{y2}).

 For $l\in\mathbb{N}$, we define the process $Y^l$ as the solution of the following ordinary differential equation, where the existence and uniqueness is guaranteed since the coefficient $g^l: \mathbb{R}^2 \to \mathbb{R}$ satisfies
Lipschitz and linear growth conditions in the second variable (see Remark \ref{cotas} and Lemma \ref{lemapsin}).
\begin{eqnarray}\label{yl}
Y^{l}_{t} &=& x + \int_{0}^{t}  g^{l}\left(B_{s} , Y^{l}_{s}\right) ds, \quad  Y^{l}_{0} = x,  
\end{eqnarray}
where
\begin{equation}\label{gl1}
g^{l}\left(B_{s} , Y^{l}_{s}\right) =  \exp \left( -\int_{0}^{B_{s}} \sigma'(\Psi^{l}(Y^{l}_{s}, u )) du \right)    b \left( \Psi^{l}(Y^{l}_{s}, B_{s} ) \right).
\end{equation}
Now, for  $m\in\mathbb{N}$, we set the partition $0=t_{0} < \ldots  <t_{m} = T$ of $[0,T]$ with $t_{i+1} = t_{i} + {T \over m}$ and we define the process $Y^
{l,m}$ as:
\begin{small}
\begin{eqnarray}
Y_{0}^{l,m} &=& x , \nonumber \\ 
Y_{t}^{l,m} &=& Y_{t^{m}_{k}}^{l,m} + \int_{t_{k}^{m}}^{t} \left[ g^{l}\left(	B_{t^{m}_{k}} , Y_{t^{m}_{k}}^{l,m} 	\right) + h_{1}^{l}\left(	B_{t^{m}_{k}} , Y_{t^{m}_{k}}^{l,m} 	\right) \left( B_{s} - B_{t^{m}_{k}}  \right) \right] ds  \label{ynn},
\end{eqnarray}
\end{small}
for $t_{k}^{m} \leq t <  t_{k+1}^{m}$, where 
$h_{1}^{l}(u,z) = {\partial g^{l}(u,z) \over \partial u}$
and $g^{l}$ is given by (\ref{gl1}). So
\begin{equation}\label{dergl}
{\partial g^{l}(u,z) \over \partial u} = - g^{l}(u,z) \sigma'(\Psi^{l}(z,u)) + \exp \left( - \int_{0}^{u} \sigma'( \Psi^{l}(z,r)) dr  \right) b'(\Psi^{l}(z,u)) { \partial \Psi^{l}(z,u) \over \partial u} .
\end{equation}
By Remark \ref{cotas}, we can see
\begin{equation}
\left\vert  g^{l}(u,z) \right\vert \leq M_{1} \exp \left( M_{2} \vert  u\vert  \right). \label{cotagl1}
\end{equation}
Also we have  
\begin{equation}
\vert h_{1}^{l}(B_{t^{m}_{k}},Y^{l,m}_{t^{m}_{k}}) \vert   \leq  C_{3}, \label{cotah1}
\end{equation} 
where   
 $C_{3} $ is given in (\ref{M}). Moreover,
 assuming that  (\ref{cotagl1}) and (\ref{cotah1}) are satisfied, it is not hard to  prove by induction  that
\begin{equation*}
\sup\limits_{t\in[0,T]} \vert Y_{t}^{n,n} \vert \leq \vert  x \vert + T\left(M_{1} \exp(M_{2} \Vert B \Vert_{\infty} )   + T^{H-\rho}  \Vert B \Vert_{H-\rho} C_{3}
\right)=M.
\end{equation*}

Finally, in a similar way to Garz\'on et al.  \cite{garzon},  for $n\in\mathbb{N}$,  we define the approximation $X^n$ of $X$ by:
\begin{equation}\label{metodo}
X_{t}^{n} = \Psi^{n} \left( Y^{n,n}_{t} , B_{t} \right),
\end{equation}
where $\Psi^{n}$ and $Y_{t}^{n,n}$ are given by (\ref{psin}) and (\ref{ynn}), respectively.\\

Now, we are in position to state our main result\\

\begin{teo}
 \label{teo1}
Let  (H) be satisfied and $1/4< H < 1/2 $, then
$$
\left\vert X_{t} - X_{t}^{n} \right\vert  \leq C n^{-2(H-\rho)},  
$$
where $\rho >0 $ is small enough and $C$ is a constant that does not depend on $n$.
\end{teo}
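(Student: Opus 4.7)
The plan is to write the error as
\[
X_t - X_t^n \;=\; \phi(Y_t, B_t) - \Psi^n(Y_t^{n,n}, B_t)
\]
and decompose it by the triangle inequality into a ``flow'' error and an ``initial condition'' error:
\[
|X_t - X_t^n| \;\leq\; |\phi(Y_t, B_t) - \Psi^n(Y_t, B_t)| + |\Psi^n(Y_t, B_t) - \Psi^n(Y_t^{n,n}, B_t)|.
\]
The first term measures how well $\Psi^n$ approximates the Doss--Sussmann flow $\phi$ along the true path, and the second reduces, via Lipschitz continuity of $\Psi^n$ in its first argument, to bounding $|Y_t - Y_t^{n,n}|$.

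For the first term I would insert the auxiliary function $\phi^n$ from (\ref{phin1})--(\ref{phin1n}) and split
\[
|\phi - \Psi^n|(Y_t, B_t) \;\leq\; |\phi(Y_t, B_t) - \phi^n(Y_t, B_t)| + |\phi^n(Y_t, B_t) - \Psi^n(Y_t, B_t)|.
\]
The first piece is a deterministic ODE-discretization error on the grid of size $\|B\|_\infty/n$, of order $n^{-2}$ under hypothesis (H). The second piece comes from comparing the definitions (\ref{phin1}) and (\ref{psin}): $\Psi^n$ is obtained from $\phi^n$ by replacing the inner nonlinearity with its first-order Taylor expansion of $\sigma$ around $\phi^n(z,u^n_{k-1})$. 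Taylor's theorem then gives a remainder quadratic in $|s-u^n_{k-1}| \leq \|B\|_\infty/n$, which after summing the $2n$ subintervals yields a bound of order $n^{-1}$. Both bounds are well inside $n^{-2(H-\rho)}$.

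For $|Y_t - Y_t^{n,n}|$ I would introduce the continuous intermediate process $Y^n$ from (\ref{yl}) and split
\[
|Y_t - Y_t^{n,n}| \;\leq\; |Y_t - Y_t^n| + |Y_t^n - Y_t^{n,n}|.
\]
The first difference compares (\ref{y2}) with (\ref{yl}), where the only change is $\phi \rightsquigarrow \Psi^n$ inside $g$; using the bound on $|\phi - \Psi^n|$ above and a Gronwall argument on the ODE for $Y^n$, this is controlled at the same rate as the flow error. The delicate piece is $|Y_t^n - Y_t^{n,n}|$: by construction (\ref{ynn}), $Y^{n,n}$ is built from $Y^n$ by first-order Taylor expansion of $g^n(\,\cdot\,, z)$ in its $B$-variable on each subinterval $[t_k^m, t_{k+1}^m]$, so the local error is the Taylor remainder, quadratic in $B_s - B_{t_k^m}$. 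Using $(H-\rho)$-H\"older continuity of $B$ with step $T/n$, this remainder is of order $n^{-2(H-\rho)}$ per unit time, and a final Gronwall argument produces a global bound of the same order. Applying the Lipschitz continuity of $\Psi^n$ from Lemma \ref{lemapsin} to conclude finishes the proof.

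The main obstacle is this last step: extracting the sharp exponent $2(H-\rho)$ from the Taylor remainder of $g^n$ in the $B$-variable. The formula (\ref{dergl}) shows that this derivative involves the partial $\partial \Psi^n/\partial u$, whose Lipschitz behaviour in $z$ and uniform bound must be carefully tracked in terms of the constants $M_1,\ldots,M_6$ and $\|B\|_\infty$; this is precisely where the auxiliary inequality (\ref{dify1}) and the restriction $H>1/4$ enter (so that $2(H-\rho)$ governs the local error rather than some lower-order residual term). Once this quantitative local estimate is in hand, the Gronwall closure and the recombination of all pieces are routine and yield the announced $n^{-2(H-\rho)}$ rate.
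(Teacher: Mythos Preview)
Your strategy is essentially the paper's: decompose through the auxiliary $\phi^n$, control the flow error $|\phi-\Psi^n|$ via Lemmas \ref{lemaphi1} and \ref{lemapsi1}, transfer the initial-condition error via a Lipschitz estimate (you use $\Psi^n$ and Lemma \ref{lemapsin}; the paper uses $\phi^n$ and Lemma \ref{difphin1}, an equivalent choice), and bound $|Y-Y^{n,n}|$ through the intermediate $Y^n$ and Gronwall. Two points deserve correction.

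First, in the ``delicate piece'' $|Y^n_t-Y^{n,n}_t|$ you appeal to the Taylor remainder of $u\mapsto g^n(u,z)$, but this map is \emph{not} globally $C^2$: by (\ref{psin}) and (\ref{dergl}), $\partial\Psi^n/\partial u$ (hence $\partial g^n/\partial u$) has jumps at the grid points $u^n_j$. A naive Taylor bound therefore fails; one picks up an additional sum $\sum_k (B_s-u^n_k)\,\Delta_k g^{n\prime}$ of jump contributions. The paper addresses this with the piecewise-$C^2$ Taylor formula of Lemma \ref{derivadagl} in the Appendix and then shows $|\Delta_k g^{n\prime}|\le C_8(u_j-u_{j-1})$, which restores the $(B_s-B_{t^n_k})^2$ control. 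This is the actual technical content of the step you flag as the ``main obstacle'', and your sketch does not yet account for it.

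Second, two misattributions: the restriction $H>1/4$ does not enter the numerical analysis; it is needed only for the Stratonovich calculus of \cite{alos1} underlying the Doss--Sussmann representation (\ref{sol}). The inequality (\ref{dify1}) is instead tied to $H<1/2$, so that the $(t-t^n_k)^2$ term coming from $\bold{F}_2$ can be absorbed into the $(t-t^n_k)^{1+2(H-\rho)}$ rate. Also, your claimed order $n^{-1}$ for $|\phi^n-\Psi^n|$ is pessimistic: the local Taylor error is cubic in the step, so Lemma \ref{lemapsi1} yields $n^{-2}$; this does not affect your final bound.
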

\begin{rem} The constant $C$ has the form
\begin{eqnarray*}
 C &=& \exp(2M_{2} \Vert B \Vert_{\infty}) \left[ C_{2} \exp(C_{1}T) + \frac{ M_{2}^{2} M_{5} \Vert B \Vert^{3}_{\infty} }{6} +  \frac{ M_{5}^{2} M_{3} \Vert B \Vert^{3}_{\infty} }{6} \right. \\
 & + & \left. C_{6} T \exp(C_{7} T) \right],
 \end{eqnarray*}
 with  
{\scriptsize
\begin{eqnarray}
C_{1} &=&  (M_{4} +  M_{1}M_{3}  \Vert B \Vert_{\infty} )  \exp( M_2 (\Vert B \Vert_{\infty} + T)), \nonumber \\
C_{2} &=&  \exp( M_2 \Vert B \Vert_{\infty})  (M_{4} +  M_{1}M_{3}\Vert B \Vert_{\infty} )({M_2^2 M_5 \Vert B \Vert^{3}_{\infty} \over 6} \exp(M_2 \Vert B \Vert_{\infty})  + {M_3 M_5^2  \Vert B \Vert^{3}_{\infty} \over 6} \exp(2M_2 \Vert B \Vert_{\infty}) ), \nonumber \\
C_{3} &=&  M_{1}M_{2} \exp \left(M_{2}\Vert B \Vert_{\infty} \right) +  M_{4} \exp \left(M_{2} \Vert B \Vert_{\infty} \right) M_{5}
\Vert B \Vert_{\infty}\left( 1 +M_2  \right),  \nonumber \\
C_4 &=&  M_{1} \exp \left( M_{2} \Vert B \Vert_{\infty} \right) + C_3 T^{H - \rho} \Vert B \Vert_{H-\rho}, \nonumber \\
C_{5} &=&  \exp (M_{2} \Vert B \Vert_{\infty}) \left[   \Vert B \Vert_{\infty} (1+M_{2}) \left( M_{3} M_{1}M_{5}  + M_{2} M_{4} M_{5}  + M_{6}M_{5}  \Vert B \Vert_{\infty} (1+ M_{2})  \right)  \right. \nonumber  \\ 
&+& \left. M_{1}M_{2} + M_{4}M_{5}( 1+ M_{2}) \right], \nonumber \\
C_{6} &=& \left[ C_{4} \exp(3 M_{2} \Vert B \Vert_{\infty} ) [M_{4} + M_{1}M_{3}\Vert B \Vert_{\infty} ]T^{1-2(H - \rho)}  + (C_{5} + C_{8}) \Vert B \Vert_{H-\rho} \right], \nonumber \\
C_{7} &=& \exp(3 M_{2} \Vert B \Vert_{\infty} ) [M_{4} + M_{1} M_{3} \Vert B \Vert_{\infty}],  \nonumber  \\
C_{8} & = &  M_{4} M_{2}  \exp(M_{2} \Vert B \Vert_{\infty}) \left[ (M_{5} + M_{5}M_{2})  \Vert B \Vert_{\infty} +  M_{5}M_{2} \right]. \nonumber
\end{eqnarray}
}
\end{rem}

\begin{rem}\label{MR}
We choose the constant $M$ because the processes given in (\ref{yl}) and (\ref{ynn}), as well as the solution to
(\ref{y2}), are bounded by $M$, as it is pointed out in this section.
\end{rem}

\section{Preliminary lemmas}\label{sec3}

In this section, we stated the auxiliary tools that  we need in order to prove our main
result Theorem \ref{teo1}. The first four lemmas are related to the apriori estimates of $\phi$. We recall you that the constants $M_{i}, i \in \{1, \ldots, 6 \}$ are introduced in Remark \ref{cotas}.

\begin{lem}\label{lemaphi1}
Let $\phi$ and $\phi^{l}$ be given by (\ref{phi}) and (\ref{phin1}), respectively. Then, Hypothesis (H) implies that , for $ (z,u) \in [-M,M] \times [-\Vert B \Vert_{\infty},\Vert B \Vert_{\infty}]$, we have
\begin{equation*}\label{3.1}
\left\vert  \phi(z,u) - \phi^{l}(z,u)  \right\vert   \leq  {M_{2}^{2} M_{5} \Vert  B\Vert_{\infty}^{3} \over {6 l^2}} \exp \left( M_{2} \Vert  B\Vert_{\infty} \right), 
\end{equation*}
\end{lem}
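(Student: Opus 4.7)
The plan is to exploit the fact that $\phi^{l}(z,\cdot)$ is a piecewise one-step Taylor approximation to $\phi(z,\cdot)$ on each subinterval of the grid $\{u_{i}^{l}\}$, then accumulate the local truncation errors through a discrete Gronwall-type recursion. By symmetry it suffices to treat $u\in[0,\Vert B\Vert_{\infty}]$, since the definition (\ref{phin1n}) for negative $u$ is completely analogous.

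On a single subinterval $(u_{k-1}^{l},u_{k}^{l}]$, set $e_{k-1}:=\phi(z,u_{k-1}^{l})-\phi^{l}(z,u_{k-1}^{l})$ and $\widetilde{\phi}^{l}(z,s):=\phi^{l}(z,u_{k-1}^{l})+(s-u_{k-1}^{l})\sigma(\phi^{l}(z,u_{k-1}^{l}))$. Subtracting (\ref{phin1}) from (\ref{phi}) gives
$$
\phi(z,u)-\phi^{l}(z,u)=e_{k-1}+\int_{u_{k-1}^{l}}^{u}\bigl[\sigma(\phi(z,s))-\sigma(\widetilde{\phi}^{l}(z,s))\bigr]ds.
$$
To bound the integrand I would combine the Lipschitz bound $M_{2}$ on $\sigma$ with the decomposition
$$
\phi(z,s)-\widetilde{\phi}^{l}(z,s)=R_{k}(s)+e_{k-1}+(s-u_{k-1}^{l})\bigl[\sigma(\phi(z,u_{k-1}^{l}))-\sigma(\phi^{l}(z,u_{k-1}^{l}))\bigr],
$$
where $R_{k}(s):=\phi(z,s)-\phi(z,u_{k-1}^{l})-(s-u_{k-1}^{l})\sigma(\phi(z,u_{k-1}^{l}))$ is the Taylor remainder. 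Differentiating (\ref{phi}) gives $\partial_{u}\phi=\sigma(\phi)$ and $\partial_{u}^{2}\phi=\sigma'(\phi)\sigma(\phi)$, so Hypothesis (H) and Remark \ref{cotas} yield $|R_{k}(s)|\leq \tfrac{1}{2}M_{2}M_{5}(s-u_{k-1}^{l})^{2}$, while the remaining two pieces are controlled by $|e_{k-1}|\bigl(1+M_{2}(s-u_{k-1}^{l})\bigr)$.

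Integrating over $(u_{k-1}^{l},u_{k}^{l}]$ with mesh $h:=\Vert B\Vert_{\infty}/l$ produces the one-step recursion
$$
|e_{k}|\leq \bigl(1+M_{2}h+\tfrac{1}{2}M_{2}^{2}h^{2}\bigr)|e_{k-1}|+\tfrac{M_{2}^{2}M_{5}h^{3}}{6}\leq e^{M_{2}h}\,|e_{k-1}|+\tfrac{M_{2}^{2}M_{5}h^{3}}{6},
$$
using $1+x+x^{2}/2\leq e^{x}$ for $x\geq 0$. Iterating from $e_{0}=0$ and bounding the geometric sum crudely by $\sum_{j=0}^{k-1}e^{M_{2}jh}\leq l\,e^{M_{2}\Vert B\Vert_{\infty}}$ produces exactly the claimed bound at grid points, and the same estimate for $u$ strictly inside $(u_{k-1}^{l},u_{k}^{l}]$ is read off directly from the one-step inequality above.

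The main technical point is verifying the second-order Taylor bound on $R_{k}$, i.e.\ that $\partial_{u}^{2}\phi$ is uniformly bounded by $M_{2}M_{5}$; this rests on differentiating (\ref{phi}) and pushing Hypothesis (H) through the composition $\sigma'(\phi)\sigma(\phi)$. Once this is in place, the recursion and the Gronwall-type summation are routine, and the only judgement call is the crude bound $\sum_{j<k}e^{M_{2}jh}\leq l\,e^{M_{2}\Vert B\Vert_{\infty}}$, which is what converts the natural estimate of order $h^{2}$ into the stated form of order $h^{3}\cdot l=\Vert B\Vert_{\infty}^{3}/l^{2}$ with the factor $\|B\|_{\infty}$ absorbed into the constant.
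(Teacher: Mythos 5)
Your proposal is correct, and its overall architecture matches the paper's: induction over the grid intervals, a local error of order $h^{3}$ per step with $h=\Vert B\Vert_{\infty}/l$, a per-step amplification factor $e^{M_{2}h}$, and a final summation over at most $l$ steps that converts $h^{3}$ into $\Vert B\Vert_{\infty}^{3}/l^{2}$ times $\exp(M_{2}\Vert B\Vert_{\infty})$. The execution differs in two ways, both legitimate. First, the decomposition inside a step: you insert the first-order Taylor polynomial of the exact flow, so your local term is the remainder $R_{k}$, controlled by $\partial_{u}^{2}\phi=\sigma'(\phi)\sigma(\phi)$ and hence by $M_{2}M_{5}$; the paper instead inserts $\phi^{l}(z,s)$ itself, so its local term $\mathbf{I}_{k}^{l}$ measures the deviation of $\phi^{l}$ from its own frozen Euler predictor and is bounded using only the Lipschitz property of $\sigma$, never differentiating the flow twice. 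Second, stability: because your decomposition keeps only the gridpoint error $e_{k-1}$ on the right-hand side, you obtain the purely discrete recursion $|e_{k}|\leq e^{M_{2}h}|e_{k-1}|+M_{2}^{2}M_{5}h^{3}/6$ and never need the Gronwall lemma, whereas the paper's decomposition leaves $|\phi(z,s)-\phi^{l}(z,s)|$ under the integral sign and therefore invokes the continuous Gronwall lemma on each subinterval, arriving at the same per-step factor $\exp\left(M_{2}(u_{k+1}^{l}-u_{k}^{l})\right)$. The constants come out identical, and your treatment of points $u$ strictly inside an interval (reading the bound off the one-step inequality, starting from $e_{0}=0$ since $\phi(z,u_{0}^{l})=\phi^{l}(z,u_{0}^{l})=z$) is sound; what your route buys is a more standard numerical-analysis consistency-plus-stability argument, at the mild cost of needing the second-derivative bound on the flow, which Hypothesis (H) supplies anyway.
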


\begin{lem}\label{lemapsi1}
Let $\phi^{l}$ and $\Psi^{l}$ be given by (\ref{phin1}) and (\ref{psin}), respectively. Then, Hypothesis (H) implies
\begin{equation*}
\left\vert \phi^{l}(z,u) - \Psi^{l}(z,u) \right\vert \leq {M_{3} M_{5}^{2} \Vert B \Vert_{\infty}^{3} \over 6 l^2} \exp \left(2 M_{2} \Vert B \Vert_{\infty} \right),  
\end{equation*}
for $ (z,u) \in [-M,M] \times [-\Vert B \Vert_{\infty},\Vert B \Vert_{\infty}]$. 
\end{lem}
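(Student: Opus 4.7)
The plan is to exploit the fact that the integrand defining $\Psi^{l}$ on each subinterval $(u^{l}_{k-1}, u^{l}_{k}]$ is precisely the first order Taylor polynomial (in the time variable $s-u^{l}_{k-1}$) of the integrand defining $\phi^{l}$, expanded at $s=u^{l}_{k-1}$. So the discrepancy on each step is controlled by the Taylor remainder of order two, plus a Lipschitz-type amplification of the error accumulated at the left endpoint. Fix $z\in[-M,M]$ and, by symmetry, treat only $u\geq 0$, letting $k$ be such that $u\in(u^{l}_{k-1},u^{l}_{k}]$; set $\Delta=\|B\|_{\infty}/l$, $\varepsilon_{k}:=|\phi^{l}(z,u^{l}_{k})-\Psi^{l}(z,u^{l}_{k})|$ and $\delta(u):=|\phi^{l}(z,u)-\Psi^{l}(z,u)|$.

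First I would apply Taylor's theorem to $\sigma$ at the point $\phi^{l}(z,u^{l}_{k-1})$, evaluated at the increment $(s-u^{l}_{k-1})\sigma(\phi^{l}(z,u^{l}_{k-1}))$. Using hypothesis (H) (i.e.\ $|\sigma|\leq M_{5}$, $|\sigma''|\leq M_{3}$) the remainder is pointwise bounded by $\tfrac{1}{2}M_{3}M_{5}^{2}(s-u^{l}_{k-1})^{2}$. Subtracting the two defining equations $(\ref{phin1})$ and $(\ref{psin})$, the difference decomposes into three pieces: the accumulated error $\varepsilon_{k-1}$ at the left endpoint, a Lipschitz-type correction coming from $\sigma(\phi^{l}(z,u^{l}_{k-1}))-\sigma(\Psi^{l}(z,u^{l}_{k-1}))$ and from $(\sigma'\sigma)(\phi^{l}(z,u^{l}_{k-1}))-(\sigma'\sigma)(\Psi^{l}(z,u^{l}_{k-1}))$ (bounded by $M_{2}$ and by $L:=M_{3}M_{5}+M_{2}^{2}$ respectively), and the Taylor remainder integrated in $s$. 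Putting everything together yields, for any $u\in(u^{l}_{k-1},u^{l}_{k}]$,
$$
\delta(u)\ \leq\ \varepsilon_{k-1}\bigl(1+M_{2}(u-u^{l}_{k-1})+\tfrac{L}{2}(u-u^{l}_{k-1})^{2}\bigr)+\tfrac{M_{3}M_{5}^{2}}{6}(u-u^{l}_{k-1})^{3}.
$$

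Specializing to $u=u^{l}_{k}$ gives the affine recursion $\varepsilon_{k}\leq A\,\varepsilon_{k-1}+B$ with $A=1+M_{2}\Delta+L\Delta^{2}/2$, $B=M_{3}M_{5}^{2}\Delta^{3}/6$, and $\varepsilon_{0}=0$. Iterating, $\varepsilon_{k}\leq B(A^{k}-1)/(A-1)\leq B\,k\,A^{k-1}$; since $k\Delta^{3}\leq l\Delta^{3}=\|B\|_{\infty}^{3}/l^{2}$ and $A^{k}\leq\exp(k(M_{2}\Delta+L\Delta^{2}/2))\leq\exp(M_{2}\|B\|_{\infty}+L\|B\|_{\infty}^{2}/(2l))$, we obtain $\varepsilon_{k}\leq\tfrac{M_{3}M_{5}^{2}\|B\|_{\infty}^{3}}{6l^{2}}\exp(2M_{2}\|B\|_{\infty})$ once the slack in $\exp(2M_{2}\|B\|_{\infty})$ is used to absorb the lower-order term $\exp(L\|B\|_{\infty}^{2}/(2l))$. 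Finally, substituting this bound for $\varepsilon_{k-1}$ into the inequality for $\delta(u)$ on the subinterval $(u^{l}_{k-1},u^{l}_{k}]$ and using $(u-u^{l}_{k-1})\leq\Delta$ throughout extends the estimate to arbitrary $u$ in this interval; the case $u<0$ is handled by the analogous formula $(\ref{phin1n})$ and the corresponding definition of $\Psi^{l}$.

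The main obstacle is of a bookkeeping nature: the one-step amplification factor $A$ contains an $O(\Delta^{2})$ contribution (from the composed Lipschitz constant $L$) which, when iterated $l$ times, produces an extra factor $\exp(L\|B\|_{\infty}^{2}/(2l))$. This factor is not bounded by $\exp(M_{2}\|B\|_{\infty})$ uniformly in small $l$, so one must either use the slack in the statement $\exp(2M_{2}\|B\|_{\infty})$ carefully (noting that for small $l$, $B/l^{2}$ is large enough that the inequality remains trivially true) or phrase the argument so that this higher-order Gronwall contribution is absorbed cleanly. The rest of the argument is a standard Taylor-plus-Gronwall computation.
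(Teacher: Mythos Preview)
Your proposal is correct and follows essentially the same route as the paper: a one-step Taylor remainder of order $\tfrac{M_3M_5^2}{6}\Delta^3$ plus a Lipschitz amplification of the accumulated endpoint error, iterated into a bound of the form $B\,k\,A^{k}$ and then absorbed into $\exp(2M_2\|B\|_\infty)$. The only cosmetic difference is that the paper splits the integrand as $[\sigma(\phi^l+\cdots)-\sigma(\Psi^l+\cdots)]+[\sigma(\Psi^l+\cdots)-\text{Taylor at }\Psi^l]$, which yields the slightly smaller quadratic coefficient $M_2^2/2$ instead of your $L/2=(M_2^2+M_3M_5)/2$; the bookkeeping issue you flag (that the extra $\exp(L\|B\|_\infty^2/(2l))$ is only dominated by the spare $\exp(M_2\|B\|_\infty)$ for $l$ large enough) is present in the paper's argument as well and is handled in exactly the way you suggest.
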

\begin{lem}\label{lemapsin}
Let  $\Psi^{l}$ be introduced in (\ref{psin}) and Hypothesis (H) hold. Then, for $(z_1, z_2,u) \in [-M,M]^2 \times [-\Vert B \Vert_{\infty},\Vert B \Vert_{\infty}]  $, 
\begin{equation*}
\left\vert \Psi^{l}(z_{1},u) -   \Psi^{l}(z_{2},u)   \right\vert  \leq  \left\vert z_{1} -z_{2} \right\vert \exp(2M_2 \Vert B \Vert_{\infty} ). 
\end{equation*}
\end{lem}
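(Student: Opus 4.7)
The plan is to exploit the piecewise definition of $\Psi^l$ on the grid $\{u^l_k\}$ and prove the estimate by a discrete Gr\"onwall-type induction. Setting $D_k := \left|\Psi^l(z_1, u^l_k) - \Psi^l(z_2, u^l_k)\right|$ and $\Delta := \|B\|_\infty/l$, the goal is to establish a one-step recursion of the form
\[
D_k \;\leq\; D_{k-1}\bigl(1 + M_2\Delta + C\Delta^2\bigr),
\]
iterate it, and convert the resulting product to an exponential via $1+x\leq e^x$.

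The first step is to subtract the defining identity (\ref{psin}) written for $z=z_1$ from the same identity written for $z=z_2$. The triangle inequality reduces matters to controlling
\[
\bigl|\sigma(\Psi^l(z_1,u^l_{k-1})) - \sigma(\Psi^l(z_2,u^l_{k-1}))\bigr| \quad\text{and}\quad \bigl|(\sigma'\sigma)(\Psi^l(z_1,u^l_{k-1})) - (\sigma'\sigma)(\Psi^l(z_2,u^l_{k-1}))\bigr|.
\]
For the first I would use that $\sigma$ is Lipschitz with constant $M_2$ (Remark \ref{cotas}). For the second I would observe that $(\sigma'\sigma)' = (\sigma')^2 + \sigma''\sigma$, so $\sigma'\sigma$ is Lipschitz with constant $M_2^2 + M_3 M_5$. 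Together these yield the one-step bound above with $C = (M_2^2 + M_3 M_5)/2$.

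Iterating $k$ times from $D_0 = |z_1 - z_2|$ (by (\ref{psinicial})) and using $1+x\leq e^x$ gives
\[
D_k \;\leq\; |z_1-z_2|\exp\bigl(kM_2\Delta + kC\Delta^2\bigr) \;\leq\; |z_1 - z_2|\exp\bigl(M_2\|B\|_\infty + C\|B\|_\infty^2/l\bigr).
\]
For a non-grid point $u\in(u^l_{k-1},u^l_k]$ one applies the same one-step estimate with $u-u^l_{k-1}$ in place of $\Delta$ to transfer the bound from $D_{k-1}$ to $|\Psi^l(z_1,u)-\Psi^l(z_2,u)|$; the case $k=-l,\ldots,0$ is treated symmetrically using the backward recursion introduced just after (\ref{psin}).

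The only technical point worth flagging is the calibration of constants: the quadratic correction $C\Delta^2$ has to be absorbed into the extra $M_2\|B\|_\infty$ slack present in the exponent $2M_2\|B\|_\infty$ of the stated bound. Since $k\Delta \leq \|B\|_\infty$, this correction is at most $C\|B\|_\infty^2/l$, which is dominated by $M_2\|B\|_\infty$ as soon as $l \geq C\|B\|_\infty/M_2$; for the finitely many smaller values of $l$ one enlarges the prefactor directly. This elementary check is the main (and essentially the only) obstacle in the argument; everything else is a routine induction on $k$.
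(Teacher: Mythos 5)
Your proposal follows essentially the same route as the paper's proof: a one-step recursion obtained by subtracting the defining identity (\ref{psin}) at $z_1$ and $z_2$, using the Lipschitz constant $M_2$ for $\sigma$ and $M_2^2+M_3M_5$ for $\sigma'\sigma$, followed by induction on $k$ and conversion of the resulting product of factors into the exponential bound. The calibration issue you flag is present in the paper's own proof as well---its last step is justified only ``for $l$ large enough'', under exactly the same condition $l \gtrsim (M_2^2+M_3M_5)\Vert B\Vert_{\infty}/(2M_2)$ that you derive---so your argument matches the paper's, and is in fact more explicit about this point.
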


\begin{lem}\label{difphin1}
Let $\phi^{l}$ be given in (\ref{phin1}). Then, under Hypothesis (H),   
\begin{equation}\label{difphin1-1}
\vert \phi^{l}(z_{1},u) - \phi^{l}(z_{2},u) \vert \leq \vert z_{1}-z_{2} \vert \exp(2M_2 \Vert B \Vert_{\infty}  ),
\end{equation}
for $(z_1, z_2,u) \in [-M,M]^2 \times [-\Vert B \Vert_{\infty},\Vert B \Vert_{\infty}] $. 
\end{lem}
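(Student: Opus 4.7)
The plan is to proceed by induction on the partition index $k$, exploiting the piecewise definition of $\phi^l$ in (\ref{phin1}). Fix $z_1,z_2\in[-M,M]$ and set $D_k:=\phi^l(z_1,u^l_k)-\phi^l(z_2,u^l_k)$. Since $\phi^l(z,0)=z$, we have $|D_0|=|z_1-z_2|$. The target bound at the grid points will follow if I can show $|D_k|\le |D_{k-1}|\,e^{2M_2\delta}$ with $\delta=u^l_k-u^l_{k-1}=\Vert B\Vert_\infty/l$, since iterating $k=1,\ldots,l$ gives exponent $2M_2\,l\delta=2M_2\Vert B\Vert_\infty$.

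The key step is to subtract the two instances of (\ref{phin1}) at $z_1$ and $z_2$. Writing $p_i:=\phi^l(z_i,u^l_{k-1})$ so that $|p_1-p_2|=|D_{k-1}|$, I obtain
\[
D_k = (p_1-p_2)+\int_{u^l_{k-1}}^{u^l_k}\Bigl[\sigma\bigl(p_1+(s-u^l_{k-1})\sigma(p_1)\bigr)-\sigma\bigl(p_2+(s-u^l_{k-1})\sigma(p_2)\bigr)\Bigr]\,ds.
\]
Applying the Lipschitz bound $|\sigma'|\le M_2$ to the outer $\sigma$, and then again to $\sigma(p_1)-\sigma(p_2)$ inside, the integrand is pointwise controlled by $M_2(1+M_2(s-u^l_{k-1}))|p_1-p_2|$. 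Integrating gives
\[
|D_k|\le |D_{k-1}|\Bigl(1+M_2\delta+\tfrac{M_2^2\delta^2}{2}\Bigr).
\]

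To recover the stated constant I use the elementary inequality $1+x+x^2/2\le e^{2x}$ for $x\ge 0$ (valid because $e^{2x}\ge 1+2x+2x^2$), yielding $|D_k|\le|D_{k-1}|e^{2M_2\delta}$. Chaining the estimate gives $|D_k|\le|z_1-z_2|e^{2M_2 u^l_k}$. For a general $u\in(u^l_{k-1},u^l_k]$, the same subtraction with the upper limit replaced by $u$ gives
\[
|\phi^l(z_1,u)-\phi^l(z_2,u)|\le|D_{k-1}|\,e^{2M_2(u-u^l_{k-1})}\le|z_1-z_2|\,e^{2M_2 u}\le|z_1-z_2|\,e^{2M_2\Vert B\Vert_\infty},
\]
establishing the claim for $u\ge 0$. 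The case $u<0$ is handled identically via (\ref{phin1n}), running the induction from $k=0$ down to $k=-l$.

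I do not expect a real obstacle here; the only subtlety is cosmetic, namely the factor of $2$ in the exponent. A direct one-shot Gronwall application would give the weaker looking constant $\exp(M_2\Vert B\Vert_\infty+M_2^2\Vert B\Vert_\infty\delta/2)$, so one must use the inequality $1+x+x^2/2\le e^{2x}$ (or equivalently bound the quadratic term by a second copy of the linear one) in order to absorb the correction cleanly into $\exp(2M_2\Vert B\Vert_\infty)$ independently of $l$.
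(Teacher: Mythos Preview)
Your proposal is correct and follows essentially the same route as the paper: induction on the partition index, the Lipschitz estimate yielding the multiplicative factor $1+M_2\delta+\tfrac{M_2^2\delta^2}{2}$ at each step, and then passing to the exponential. The only cosmetic difference is that the paper first records the product bound $\prod_{j=1}^{k}\bigl(1+M_2\delta+\tfrac{M_2^2\delta^2}{2}\bigr)$ and converts it at the end via $1+M_2\delta+\tfrac{M_2^2\delta^2}{2}\le 1+2M_2\delta$ for $l$ large, whereas you use $1+x+\tfrac{x^2}{2}\le e^{2x}$ at each step---your version has the small advantage of holding for every $l$.
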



  Now we proceed to state the lemmas referred to the estimates on $Y^n - Y$.
\begin{lem}\label{lemayyn}
Assume that Hypothesis (H) is satisfied. Let $Y$ and $Y^{n}$ be given in (\ref{y2}) and (\ref{yl}), respectively. Then, for $t \in [0,T]$, 
\begin{equation*}
\left\vert Y_{t} - Y_{t}^{n} \right\vert  \leq \exp (C_1 T) {C_2 \over n^{2}},
\end{equation*}
where 
\begin{equation*}
C_{1} =  (M_{4} +  M_{1}M_{3}  \Vert B \Vert_{\infty} )  \exp( M_2 (\Vert B \Vert_{\infty} + T))
\end{equation*}
and 
\begin{eqnarray*}
C_{2} &=&  \exp( M_2 \vert B \Vert_{\infty})  (M_{4} +  M_{1}M_{3}\Vert B \Vert_{\infty}) \\
& \times & \left({T M_2^2 M_5 \Vert B \Vert_{\infty}^{3} \over 6} \exp(M_2 \Vert B \Vert_{\infty})  + {T M_3 M_5^2  \Vert B \Vert_{\infty}^3 \over 6} \exp(2M_2 \Vert B \Vert_{\infty}) \right).
\end{eqnarray*}

\end{lem}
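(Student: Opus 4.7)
The plan is to write the difference $Y_t-Y^n_t$ as a single Lebesgue integral and split the integrand by inserting an intermediate term. Concretely, writing
\[
G(z,u)=\exp\!\Bigl(-\int_{0}^{u}\sigma'(\phi(z,r))\,dr\Bigr)\,b(\phi(z,u)),\qquad G^{n}(z,u)=g^{n}(u,z),
\]
we have $Y_t-Y^n_t=\int_{0}^{t}\bigl[G(Y_s,B_s)-G^{n}(Y^n_s,B_s)\bigr]\,ds$. I would add and subtract $G^{n}(Y_s,B_s)$ so the integrand breaks into an \emph{approximation error} term $G(Y_s,B_s)-G^{n}(Y_s,B_s)$ and a \emph{stability} term $G^{n}(Y_s,B_s)-G^{n}(Y^n_s,B_s)$.

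For the approximation term, I would use $|e^{-a}-e^{-b}|\le e^{M_2\|B\|_\infty}|a-b|$ (since $|\sigma'|\le M_2$) together with the Lipschitz bounds $|\sigma'(y_1)-\sigma'(y_2)|\le M_3|y_1-y_2|$ and $|b(y_1)-b(y_2)|\le M_4|y_1-y_2|$. Combining with $|b|\le M_1$, one gets
\[
|G(Y_s,B_s)-G^{n}(Y_s,B_s)|\le\exp(M_2\|B\|_\infty)\bigl(M_1M_3\|B\|_\infty+M_4\bigr)\sup_{(z,u)}|\phi(z,u)-\Psi^{n}(z,u)|,
\]
where the supremum is over $[-M,M]\times[-\|B\|_\infty,\|B\|_\infty]$. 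The triangle inequality $|\phi-\Psi^n|\le|\phi-\phi^n|+|\phi^n-\Psi^n|$ combined with Lemmas \ref{lemaphi1} and \ref{lemapsi1} then yields an $O(1/n^2)$ bound, producing precisely the two summands appearing inside the parentheses of $C_2$.

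For the stability term, I would again use $|e^{-a}-e^{-b}|\le e^{M_2\|B\|_\infty}|a-b|$ to reduce to differences of $\sigma'\circ\Psi^n$ and $b\circ\Psi^n$ at $z=Y_s$ versus $z=Y^n_s$. Lemma \ref{lemapsin} gives the Lipschitz constant $\exp(2M_2\|B\|_\infty)$ for $\Psi^{n}$ in $z$, so after using $|\sigma''|\le M_3$ and $|b'|\le M_4$ one arrives at
\[
|G^{n}(Y_s,B_s)-G^{n}(Y^n_s,B_s)|\le C_1'\,|Y_s-Y^n_s|
\]
for a constant $C_1'$ of the same flavour as $C_1$ in the statement. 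Putting the two bounds together inside the integral gives
\[
|Y_t-Y^n_t|\le\frac{C_2}{n^2}+C_1\!\int_{0}^{t}|Y_s-Y^n_s|\,ds,
\]
and Gr\"onwall's lemma yields the stated $\exp(C_1T)C_2/n^2$ bound.

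The main obstacle is purely bookkeeping: tracking all the exponential factors $\exp(kM_2\|B\|_\infty)$ so that the constants come out exactly as claimed, and making sure every evaluation of $\Psi^{n}$ stays inside the domain $[-M,M]\times[-\|B\|_\infty,\|B\|_\infty]$ where Lemmas \ref{lemaphi1}--\ref{lemapsin} apply (this is guaranteed by Remark \ref{MR} and the uniform bound $\sup_{t}|Y^{n,n}_t|\le M$ already established). Modulo that careful accounting, there is no genuine analytic difficulty: this is a standard apriori-estimate-plus-Gr\"onwall argument, using Lemmas \ref{lemaphi1}--\ref{lemapsin} as black boxes.
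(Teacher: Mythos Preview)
Your proposal is correct and follows essentially the same strategy as the paper: split the integrand, bound the pieces via Lemmas \ref{lemaphi1}--\ref{lemapsin}, and conclude by Gr\"onwall. The only difference is cosmetic: the paper first splits the product $e^{-\int\sigma'}\cdot b$ (writing $e_1b_1-e_2b_2=e_1(b_1-b_2)+(e_1-e_2)b_2$) and then, inside each piece, inserts the intermediate points $\phi(Y^n_s,\cdot)$ and $\phi^n(Y^n_s,\cdot)$; you instead insert $G^n(Y_s,B_s)$ first, separating the approximation error from the stability term at the outset. Both routes use the same ingredients and yield the same $O(1/n^2)$ bound. Your version picks up the Lipschitz constant of $\Psi^n$ in $z$ (Lemma \ref{lemapsin}, factor $\exp(2M_2\|B\|_\infty)$) for the stability term, whereas the paper uses the Lipschitz constant of $\phi$ in $z$ (from (\ref{eq:phi}), factor $\exp(M_2\|B\|_\infty)$), so your $C_1'$ differs from the stated $C_1$ by an extra $\exp(M_2\|B\|_\infty)$---which you already anticipated. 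The domain issue you flag is indeed handled by Remark \ref{MR}.
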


\begin{lem}\label{difynn}
Let $Y^{n,n}$ be defined in (\ref{ynn}). Then Hypothesis (H) implies, for $s \in (t_{k}^{n}, t_{k+1}^{n}]$, 
\begin{equation*}
\vert Y^{n,n}_{s} - Y^{n,n}_{t^{n}_{k}} \vert  \leq C_{4} (s-t_{k}^{n}), 
\end{equation*}
where 
$C_4 =  M_{1} \exp \left( M_{2} \Vert B \Vert_{\infty} \right) + C_3 T^{H - \rho} \Vert B \Vert_{H-\rho}$ and
\begin{equation}
C_{3} =  M_{1}M_{2} \exp \left(M_{2}\Vert B \Vert_{\infty} \right) +  M_{4} \exp \left(M_{2} \Vert B \Vert_{\infty} \right) M_{5}
\Vert B \Vert_{\infty}\left( 1 +M_2  \right).  \nonumber 
\end{equation}
\end{lem}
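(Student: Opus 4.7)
The plan is to estimate the increment $Y^{n,n}_s - Y^{n,n}_{t_k^n}$ directly from its integral representation (\ref{ynn}). For $s \in (t_k^n, t_{k+1}^n]$, that representation gives
\begin{equation*}
Y^{n,n}_s - Y^{n,n}_{t_k^n} = \int_{t_k^n}^{s}\left[g^{n}(B_{t_k^n},Y^{n,n}_{t_k^n}) + h_1^{n}(B_{t_k^n},Y^{n,n}_{t_k^n})(B_r - B_{t_k^n})\right]dr,
\end{equation*}
so it suffices to bound the two terms inside the integrand separately and then integrate.

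For the first term, I would invoke the pointwise estimate (\ref{cotagl1}), which yields $|g^{n}(B_{t_k^n},Y^{n,n}_{t_k^n})| \leq M_1\exp(M_2\Vert B\Vert_\infty)$. For the second term, the bound (\ref{cotah1}) gives $|h_1^{n}(B_{t_k^n},Y^{n,n}_{t_k^n})|\leq C_3$, while the $(H-\rho)$-H\"older continuity of $B$ controls the increment via $|B_r - B_{t_k^n}| \leq \Vert B\Vert_{H-\rho}(r-t_k^n)^{H-\rho}$. Combining these via the triangle inequality inside the integral yields
\begin{equation*}
|Y^{n,n}_s - Y^{n,n}_{t_k^n}| \leq M_1\exp(M_2\Vert B\Vert_\infty)(s-t_k^n) + C_3\Vert B\Vert_{H-\rho}\int_{t_k^n}^{s}(r-t_k^n)^{H-\rho}\,dr.
\end{equation*}

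The only mildly delicate step is to reshape the second integral into the required linear form: since $s-t_k^n \leq T$ and $H-\rho+1 \geq 1$, one bounds $\int_{t_k^n}^{s}(r-t_k^n)^{H-\rho}\,dr = (s-t_k^n)^{H-\rho+1}/(H-\rho+1) \leq T^{H-\rho}(s-t_k^n)$, which delivers the stated constant $C_4$. There is no serious obstacle; the lemma is essentially a one-step a priori estimate, and the only point to watch is trading the fractional power $(s-t_k^n)^{H-\rho+1}$ for a genuinely linear factor by means of the uniform bound $s-t_k^n\leq T$.
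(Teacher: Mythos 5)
Your proposal is correct and follows essentially the same route as the paper: decompose the increment via the integral representation (\ref{ynn}), bound $g^{n}$ by (\ref{cotagl1}) and $h_{1}^{n}$ by (\ref{cotah1}), use the $(H-\rho)$-H\"older continuity of $B$, and trade the factor $(s-t_{k}^{n})^{H-\rho}$ for $T^{H-\rho}$ to obtain the linear bound with constant $C_{4}$. The only cosmetic difference is that you keep the factor $1/(H-\rho+1)$ from the integration, which the paper simply drops since it is at most $1$.
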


\begin{lem}\label{difynynn}
 Suppose that Hypothesis (H) holds. Let $Y^{n}$ and $Y^{n,n}$ be given in (\ref{yl}) and (\ref{ynn}), respectively. Then,  

 \begin{equation*}
\left\vert Y^{n}_{t} - Y_{t}^{n,n} \right\vert  \leq   C_{6} T \left({T \over n} \right)^{2(H-\rho)}  \exp(C_{7}T), \quad t \in [0,T],
\end{equation*}
where $0<\rho <H$, 
\begin{eqnarray*}
C_{5} &=&  \exp (M_{2} \Vert B \Vert_{\infty}) \left[   \Vert B \Vert_{\infty} (1+M_{2}) \left( M_{3} M_{1}M_{5}  + M_{2} M_{4} M_{5}  + M_{6}M_{5}  \Vert B \Vert_{\infty} (1+ M_{2})  \right)  \right. \\ 
&+& \left. M_{1}M_{2} + M_{4}M_{5}( 1+ M_{2}) \right],
\end{eqnarray*}
\begin{equation}
C_{6} = \left[ C_{4} \exp(3 M_{2} \Vert B \Vert_{\infty} ) [M_{4} + M_{1}M_{3}\Vert B \Vert_{\infty} ]T^{1-2(H - \rho)}  + (C_{5} + C_{8}) \Vert B \Vert_{H-\rho} \right] , \label{c6}  
\end{equation}
with $C_{4}$ given in Lemma \ref{difynn}, and
\begin{equation}\label{c7}
C_{7} =  \exp(3 M_{2} \Vert B \Vert_{\infty} ) [M_{4} + M_{1} M_{3} \Vert B \Vert_{\infty}],
\end{equation}
\begin{equation*}
C_{8} =  M_{4} M_{2}  \exp(M_{2} \Vert B \Vert_{\infty}) \left[ (M_{5} + M_{5}M_{2})  \Vert B \Vert_{\infty} +  M_{5}M_{2} \right]. 
\end{equation*}
\end{lem}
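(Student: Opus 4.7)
From equations (\ref{yl}) and (\ref{ynn}), setting $\tau(s):=t^n_k$ for $s\in[t^n_k,t^n_{k+1})$, we have, for every $t\in[0,T]$,
\begin{equation*}
Y^n_t-Y^{n,n}_t=\int_0^t\Bigl[g^n(B_s,Y^n_s)-g^n(B_{\tau(s)},Y^{n,n}_{\tau(s)})-h_1^n(B_{\tau(s)},Y^{n,n}_{\tau(s)})\bigl(B_s-B_{\tau(s)}\bigr)\Bigr]ds.
\end{equation*}
I would add and subtract $g^n(B_s,Y^{n,n}_{\tau(s)})$ inside the bracket to split the integrand into a Lipschitz-in-$z$ piece, $g^n(B_s,Y^n_s)-g^n(B_s,Y^{n,n}_{\tau(s)})$, and a Taylor remainder in the $u$ variable, $g^n(B_s,Y^{n,n}_{\tau(s)})-g^n(B_{\tau(s)},Y^{n,n}_{\tau(s)})-h_1^n(B_{\tau(s)},Y^{n,n}_{\tau(s)})(B_s-B_{\tau(s)})$.

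For the Lipschitz piece, differentiating (\ref{gl1}) in $z$ and using the bound (\ref{cotagl1}) on $|g^n|$ together with the Lipschitz constant $\exp(2M_2\|B\|_\infty)$ of $\Psi^n$ in $z$ from Lemma \ref{lemapsin} gives a uniform (in $u$ and $n$) Lipschitz constant of exactly $C_7=\exp(3M_2\|B\|_\infty)[M_4+M_1M_3\|B\|_\infty]$ for $z\mapsto g^n(u,z)$. I then decompose $Y^n_s-Y^{n,n}_{\tau(s)}=(Y^n_s-Y^n_{\tau(s)})+(Y^n_{\tau(s)}-Y^{n,n}_{\tau(s)})$. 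The first summand is at most $M_1\exp(M_2\|B\|_\infty)(s-\tau(s))\le C_4(s-\tau(s))$ directly from (\ref{cotagl1}), while the second summand will be absorbed into the Gronwall step.

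For the Taylor piece, I apply the general Taylor-expansion result from the Appendix to the map $u\mapsto g^n(u,Y^{n,n}_{\tau(s)})$ at $u=B_{\tau(s)}$. This produces a remainder of order $(B_s-B_{\tau(s)})^2$ whose constant is precisely $(C_5+C_8)/2$, where $C_5$ and $C_8$ collect the uniform bounds on $\partial_u h_1^n=\partial^2_{uu}g^n$ read off by differentiating (\ref{dergl}) once more, using Remark \ref{cotas} together with Lemmas \ref{lemapsi1}--\ref{difphin1}. By H\"older regularity of $B$,
\begin{equation*}
(B_s-B_{\tau(s)})^2\le\|B\|_{H-\rho}^2\,(T/n)^{2(H-\rho)},
\end{equation*}
and since $2(H-\rho)<1$ for $H<1/2$ and $\rho$ small, also $s-\tau(s)\le T^{1-2(H-\rho)}(T/n)^{2(H-\rho)}$, which is precisely where the factor $T^{1-2(H-\rho)}$ appearing in $C_6$ (see (\ref{c6})) originates. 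Assembling the two pieces and integrating over $[0,t]$, one arrives at an inequality of the form
\begin{equation*}
|Y^n_t-Y^{n,n}_t|\le C_6\,T\,(T/n)^{2(H-\rho)}+C_7\int_0^t|Y^n_s-Y^{n,n}_s|\,ds,
\end{equation*}
after replacing $|Y^n_{\tau(s)}-Y^{n,n}_{\tau(s)}|$ by $|Y^n_s-Y^{n,n}_s|$ modulo a lower-order term already absorbed into $C_6$. Gronwall's lemma then yields the claimed bound with the factor $\exp(C_7 T)$.

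The main obstacle is the Taylor-remainder step: $g^n$ is built from $\Psi^n$, which is only piecewise smooth in $u$ (its $u$-derivative jumps at the grid points $u^n_k$), so the textbook one-variable Taylor theorem with a pointwise second-derivative remainder does not apply verbatim. This is exactly the role of the general result in the Appendix, which provides a Taylor-like expansion for continuous functions whose first and second difference quotients satisfy explicit bounds, producing precisely $C_5$ and $C_8$. A secondary technical point is the identification of the Lipschitz constant in $z$ with $C_7$; this requires combining the uniform bound (\ref{cotagl1}) on $g^n$ with Lemma \ref{lemapsin} applied to $\Psi^n$ and, after differentiating (\ref{dergl}) in $z$, the hypotheses on $\sigma', \sigma'', b'$ from Remark \ref{cotas}.
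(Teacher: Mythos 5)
Your proposal is correct and follows essentially the same route as the paper's proof: the same splitting of the integrand into a Lipschitz-in-$z$ part (with constant $C_{7}$ obtained from the bound on $g^{n}$ and Lemma \ref{lemapsin}), a local-increment part of size $C_{4}(s-\tau(s))$, and a Taylor remainder in the $u$ variable handled by the piecewise Taylor expansion of Lemma \ref{derivadagl} (producing $C_{5}+C_{8}$), followed by Gronwall's lemma. The only difference is organizational: you write a single integral inequality on $[0,t]$ via the freezing map $\tau$ and apply Gronwall once, whereas the paper proves the bound interval-by-interval by induction on $k$, applying Gronwall on each $(t_{k}^{n},t_{k+1}^{n}]$; the underlying estimates are identical.
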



\section{Convergence of the Scheme: Proof of Theorem \ref{teo1}}\label{sec4}
We are now ready to prove the main result of this article, which gives a theoretical bound on the speed of convergence for $X^n$ defined in (\ref{metodo}). Remember that the constants $M_{i}, i \in \{1, \ldots, 6 \}$, are given in Remark \ref{cotas}.

\begin{proof}

By (\ref{sol}) and (\ref{metodo}), we have, for  $t\in [0,T]$, 
\begin{equation*}
\vert X_{t} - X_{t}^{n} \vert  \leq H_{1}(t) + H_{2}(t) + H_{3}(t),
\end{equation*}
where
\begin{small}
\begin{eqnarray}
H_{1}(t) &=&  \vert \phi\left( Y_{t}, B_{t}   \right) - \phi^{n} ( Y^{n}_{t}, B_{t} ) \vert  \nonumber \\ 
H_{2}(t) &=&  \vert \phi^{n} ( Y^{n}_{t}, B_{t} )  - \phi^{n} ( Y^{n,n}_{t}, B_{t} )       \vert  \nonumber \\ \nonumber
H_{3}(t) &=&  \vert \phi^{n} ( Y^{n,n}_{t}, B_{t} )  - \Psi^{n} ( Y^{n,n}_{t}, B_{t} )\vert. \nonumber
\end{eqnarray}
\end{small}
Now we proceed to obtain estimates of $H_{1}$, $H_{2}$ and $H_{3}$. By property (\ref{eq:phi}), we get
 
\begin{small}
\begin{eqnarray*}
H_{1}(t) & \leq & \vert \phi\left( Y_{t}, B_{t}   \right) - \phi ( Y^{n}_{t}, B_{t} ) \vert  
+ \vert \phi ( Y^{n}_{t}, B_{t} ) - \phi^{n} ( Y^{n}_{t}, B_{t} ) \vert  \nonumber \\ 
& \leq & \exp \left(M_{2} \Vert B \Vert_{\infty} \right) \left\vert Y_{t} - Y_{t}^{n} \right\vert  
+ \vert \phi ( Y^{n}_{t}, B_{t} ) - \phi^{n} ( Y^{n}_{t}, B_{t} ) \vert. \nonumber \\ 
\end{eqnarray*}
\end{small}
Therefore, by Lemmas \ref{lemaphi1} and \ref{lemayyn}  
\begin{equation}
H_{1}(t) \leq \exp \left(M_{2} \Vert B \Vert_{\infty} \right) \exp(C_{1}T) {C_{2} \over n^2 }+  {M_{2}^{2} M_{5} \Vert B \Vert_{\infty}^{3} \over 6n^2 }    \exp \left( M_{2} \Vert B \Vert_{\infty} \right). \label{h1}
\end{equation}

Also Lemmas \ref{difphin1} and \ref{difynynn}, yield
\begin{small}
\begin{eqnarray}
H_{2}(t) & \leq & \exp(2M_{2} \Vert B \Vert_{\infty})  \vert  Y^{n}_{t} - Y^{n,n}_{t} \vert  \nonumber \\
& \leq &\exp(2M_{2} \Vert B \Vert_{\infty})C_{6} T \left({T \over n} \right)^{2(H-\rho)}  \exp(C_{7}T).\label{h2}
\end{eqnarray}
\end{small}
For $H_{3}$, we use Lemma \ref{lemapsi1}. So
\begin{equation}
H_{3}(t) \leq {M_{3} M_{5}^{2} \Vert B \Vert_{\infty}^3 \over 6n^2}  \exp(2M_{2}\Vert B \Vert_{\infty}).\label{h3}
\end{equation}

Finally, from (\ref{h1}) to (\ref{h3}), we have
\begin{equation*}
\vert X_{t} - X_{t}^{n} \vert  \leq C n^{-2(H -\rho)  }  ,
\end{equation*}  
which shows that Theorem \ref{teo1} holds.
\end{proof}

\section{Proofs of preliminary lemmas}\label{sec5}
Here, we provide the proofs of Lemmas \ref{lemaphi1} to \ref{difynynn}. First, we will prove, by induction, that the  statements of  Lemmas \ref{lemaphi1} to \ref{difphin1} hold   for all $k=1,2, \ldots l$ and  $u \in (u_{k-1}^{l} ,  u^{l}_{k}]$. We will consider for simplicity the case $u>0$, the other case can be treated similarly.  
\subsection*{Proof of Lemma \ref{lemaphi1}}

\begin{proof} Let $z\in[-M,M]$.
We will prove by induction that, for all $k \in \{1,\ldots, l\}$ and  $u \in (u_{k-1}^{l} ,  u^{l}_{k}]$, we have 
\begin{equation}\label{philema1}
\left\vert  \phi(z,u) - \phi^{l}(z,u)  \right\vert   \leq  {M_{2}^{2} M_{5} \Vert  B\Vert_{\infty}^{3} \over 6 l^3} \tilde{C}_k,
\end{equation}
where $\tilde{C}_k = \exp \left( M_{2} (u^{l}_{k} - u^{l}_{0})\right) + \ldots + \exp \left( M_{2} (u^{l}_{k} - u^{l}_{k-1})\right).$
As a consequence
 we obtain the  global bound
\begin{equation}\label{philema}
\left\vert \phi(z,u) - \phi^{l}(z,u) \right\vert \leq  {M_{2}^{2} M_{5} \Vert  B\Vert_{\infty}^{3} \over 6 l^2}  \exp \left( M_{2} \Vert  B\Vert_{\infty} \right),
\end{equation}
where $M_2$ and $M_5$ are constants independent of $k$ and they are given in Remark \ref{cotas}.

First for $k=1$, let  $0=u_{0}^{l} < u \leq u^{l}_{1}$, then (\ref{phi}), (\ref{phin1}),
 the Lipschitz condition on $\sigma$ (with constant $M_2$) and the fact that $\phi(z,u_0^l) = \phi^l(z,u_0^l)=z$ imply
\begin{small}
\begin{eqnarray}
\left\vert  \phi(z,u) - \phi^{l}(z,u) \right\vert  & \leq & M_{2} \int_{u_{0}^{l}}^{u} \left\vert \phi(z,s) - z - (s-u_{0}^{l}) \sigma \left( z \right) \right\vert ds    \nonumber \\  
& \leq &   M_{2} \int_{u_{0}^{l}}^{u}\left\vert \phi(z,s) - \phi^{l}(z,s) \right\vert ds + M_{2}\int_{u_{0}^{l}}^{u} \left\vert \phi^{l}(z,s) -z-(s-u_{0}^{l})\sigma(z)\right\vert ds \nonumber \\ 
& = & M_{2}\int_{u_{0}^{l}}^{u} \left\vert \phi(z,s) - \phi^{l}(z,s) \right\vert ds +    M_{2} \bold{I}_{0}^{l}. \label{i0}  
\end{eqnarray}
\end{small} 
Next, we bound the term $\bold{I}_{0}^{l} $, 
\begin{small}
\begin{equation*}
\bold{I}_{0}^{l}= \int_{u_{0}^{l}}^{u} \left\vert \phi^{l}(z,s) -z-(s-u_{0}^{l})\sigma(z)\right\vert ds  =  \int_{u_{0}^{l}}^{u} \left\vert \phi^{l}(z,s) -z- \int_{u_0^l}^s \sigma(z)dr \right\vert ds.
\end{equation*}
From (\ref{phin1}), the Lipschitz condition and the bound on $\sigma$, we get
\begin{eqnarray}
\bold{I}_{0}^{l}& = &  \int_{u_{0}^{l}}^{u} \left\vert \int_{u_{0}^{l}}^{s}  \sigma \left(z+(r-u_{0}^{l})\sigma(z) \right)dr  - \int_{u_{0}^{l}}^{s} \sigma(z) dr \right\vert ds \nonumber \\
& \leq &  \int_{u_{0}^{l}}^{u}\int_{u_{0}^{l}}^{s} \left\vert \sigma \left(z + (r-u_{0}^{l})\sigma(z)\right) -\sigma(z)\right\vert dr ds \nonumber \\ 
& \leq & M_{2}\int_{u_{0}^{l}}^{u}\int_{u_{0}^{l}}^{s}(r-u_{0}^{l})\left\vert \sigma(z) \right\vert dr ds \leq  { M_{2} M_{5}
 \Vert B\Vert_{\infty}^3 \over 6 l^3}.  \label{I0} 
\end{eqnarray}
\end{small}
Therefore by (\ref{i0}), (\ref{I0}) and the Gronwall lemma we obtain
\begin{small}
\begin{equation*}
\left\vert  \phi(z,u) - \phi^{l}(z,u)  \right\vert   \leq { M_{2}^{2} M_{5} \Vert B\Vert_{\infty}^3  \over 6 l^3} \exp \left( M_{2} (u_1^l - u_0^l)\right), \quad \mbox{for} \ u \in (0 , u_{1}^{l}]. \label{phid0} 
\end{equation*}
\end{small} 

Now, consider an index $k \in \{2,\ldots, l\}$. Our induction assumption
is that (\ref{philema1}) is true for $u \in (u_{k-1}^{l},u^{l}_{k}]$. We shall now propagate the induction, that is prove that the inequality is also true for its successor, 
$k+1$. We will thus study (\ref{philema1}) for $u \in (u_{k}^{l} ,u^{l}_{k+1}]$. Following (\ref{phi}),  (\ref{phin1}) and our induction hypothesis we  establish 
\begin{small}
\begin{eqnarray}
\left\vert  \phi(z,u) - \phi^{l}(z,u)  \right\vert & \leq & \left\vert \phi(z,u^{l}_{k}) - \phi^{l}(z,u^{l}_{k})  \right\vert  \nonumber \\ 
& + & \int_{u^{l}_{k}}^{u} \left\vert \sigma \left( \phi(z,s) \right) -\sigma \left(\phi^{l}(z,u^{l}_{k}) + (s- u^{l}_{k}) \sigma \left( \phi^{l}(z,u^{l}_{k}) \right) \right) \right\vert ds, \nonumber \\
 & \leq &  \tilde{C}_k {M_2^2 M_5 \Vert B\Vert_{\infty}^3 \over 6{l}^{3}} +  \int_{u^{l}_{k}}^{u} \left\vert   \sigma \left( \phi(z,s) \right)-\sigma \left(\phi^{l}(z,u^{l}_{k}) + (s- u^{l}_{k}) \sigma \left( \phi^{l}(z,u^{l}_{k}) \right) \right) \right\vert ds. \nonumber
 \end{eqnarray}
From Lipschitz condition on $\sigma$, 
 \begin{eqnarray}
\left\vert \phi(z,u) - \phi^{l}(z,u) \right\vert & \leq & \tilde{C}_k {M_2^2 M_5 \Vert B \Vert^3_{\infty} \over 6{l}^{3}}+ M_{2} \int_{u^{l}_{k}}^{u}\left\vert \phi(z,s)-\phi^{l}(z,s) \right\vert ds \nonumber \\ 
& + & M_{2}\int_{u^{l}_{k}}^{u}\left\vert \phi^{l}(z,s)-\phi^{l}(z,u^{l}_{k})- (s- u^{l}_{k}) \sigma\left(\phi^{l}(z,u^{l}_{k})\right)\right\vert ds \nonumber \\ 
& = & \tilde{C}_k {M_2^2 M_5 \Vert B \Vert^3_{\infty}  \over 6{l}^{3}} + M_{2} \int_{u^{l}_{k}}^{u} \left\vert    \phi(z,s)-\phi^{l}(z,s) \right\vert ds  + M_{2} {\bold I}^{l}_{k},  \label{ik}
\end{eqnarray}
\end{small}
where $\tilde{C}_k = \exp \left( M_{2} (u^{l}_{k} - u^{l}_{0})\right) +\ldots + \exp \left( M_{2} (u^{l}_{k} - u^{l}_{k-1})\right).$

Now, we analyze the term $ {\bold I}^{l}_{k}$, given in equation (\ref{ik}). 
From (\ref{phin1}), the Lipschitz condition and the bound on $\sigma$ we obtain  
\begin{small}
\begin{eqnarray}
{\bold I}^{l}_{k} & \leq &  \int_{u^{l}_{k}}^{u} \int_{u^{l}_{k}}^{s} \left\vert \sigma \left(\phi^{l}(z,u^{l}_{k}) + (r- u^{l}_{k}) \sigma \left( \phi^{l}(z,u^{l}_{k}) \right) \right) - \sigma \left(\phi^{l}(z,u^{l}_{k}) \right) \right\vert dr ds \nonumber \\ 
& \leq & M_{2} \int_{u^{l}_{k}}^{u} \int_{u^{l}_{k}}^{s} (r - u^{l}_{k}) \left\vert \sigma \left(  \phi^{l}(z,u^{l}_{k}) \right) \right\vert drds  \leq {M_2 M_5 \Vert B \Vert^3_{\infty} \over 6{l}^{3}}. \label{IK}
\end{eqnarray}
\end{small}
Therefore inequalities (\ref{ik}) and (\ref{IK}) yield
\begin{small}
\begin{eqnarray}
\left\vert \phi(z,u) - \phi^{l}(z,u)  \right\vert & \leq & \tilde{C}_k {M_2^2 M_5 \Vert B \Vert^3_{\infty} \over 6{l}^{3}} +{M_2^2 M_5 \Vert B \Vert^3_{\infty} \over 6{l}^{3}} +  M_{2} \int_{u^{l}_{k}}^{u} \left\vert\phi(z,s)-\phi^{l}(z,s) \right\vert ds.
\nonumber
\end{eqnarray}
\end{small}

Thus, the Gronwall lemma allows us to establish

\begin{eqnarray}
 & & \left\vert \phi(z,u) - \phi^{l}(z,u)  \right\vert \leq  \left(\tilde{C}_k + 1 \right)\left( {M_2^2 M_5 \Vert B \Vert^3_{\infty}  \over 6{l}^{3}}   \right) \exp(M_2(u^{l}_{k+1} - u^l_k))    
\nonumber \\
&=& \left( \exp \left( M_{2} (u^{l}_{k} - u^{l}_{0})\right) +\ldots + \exp \left( M_{2} (u^{l}_{k} - u^{l}_{k-1}) \right) +1 \right)  \exp(M_2(u^{l}_{k+1} - u^{l}_k)    \left( {M_2^2 M_5 \Vert B \Vert^3_{\infty} \over 6{l}^{3}}   \right)
\nonumber \\ \nonumber \\
&=& \tilde{C}_{k+1} {M_2^2 M_5 \Vert B \Vert^3_{\infty} \over 6{l}^{3}}, \nonumber
\end{eqnarray}
which shows that (\ref{philema1}) is satisfied for $k+1$. 

Now, we prove  that (\ref{philema}) is true.  For all $(z,u) \in [-M,M] \times [-\Vert B \Vert_{\infty},\Vert B \Vert_{\infty}]$, there is some $k \in \lbrace 1,\ldots, l \rbrace$ such that 
$u^{l}_{k-1} < u \leq u^{l}_{k} $ and by the previous calculations 
\begin{small}
\begin{eqnarray}
\left\vert  \phi(z,u) - \phi^{l}(z,u)  \right\vert   & \leq & { M_{2}^{2} M_{5} \Vert B \Vert^3_{\infty} \over 6 l^3} \left[ \exp \left( M_{2} (u^{l}_{k} - u^{l}_{0})\right) +\ldots + \exp \left( M_{2} (u^{l}_{k} - u^{l}_{k-1})\right) \right]
\nonumber \\ 
& \leq & { M_{2}^{2} M_{5} \Vert B \Vert^3_{\infty} \over 6 l^3} k \exp \left( {M_{2} \Vert B \Vert_{\infty}} \right)\nonumber \\ 
& \leq & { M_{2}^{2} M_{5} \Vert B \Vert^3_{\infty} \over 6 l^2}  \exp \left( {M_{2} \Vert B \Vert_{\infty}}\right), \nonumber
\end{eqnarray}
\end{small}
proving the lemma.
\end{proof}

\subsection*{Proof of Lemma \ref{lemapsi1}}
\begin{proof}
As in the proof of Lemma \ref{lemaphi1}, we will prove by induction that, for all $k \in \{1,\ldots, l\}$ and  $u \in (u_{k-1}^{l} ,  u^{l}_{k}]$, we have 
\begin{equation}\label{lemadifl-1}
\left\vert \phi^{l}(z,u) -\Psi^{l}(z,u)\right\vert\leq {M_{3} M_{5}^{2} \Vert B \Vert_{\infty}^{3} \over 6 l^3} k \left( 1 + A_l \right) ^k,  
\end{equation}
with $A_l =  1 + {M_2 \Vert B \Vert_{\infty} \over l} + {1 \over 2}  ({M_2 \Vert B \Vert_{\infty} \over l})^2$. Hence,  
\begin{equation}\label{lemadifl}
\left\vert \phi^{l}(z,u) -\Psi^{l}(z,u)\right\vert\leq {M_{3} M_{5}^{2} \Vert B \Vert_{\infty}^{3} \over 6 l^2} \exp(2 M_2 \Vert B \Vert_{\infty}).  
\end{equation}
where $M_2$, $M_3$ and $M_5$ are constants independent on $k$ and are given in Remark \ref{cotas}.

We first assume that $k=1$.  If $0=u_{0}^{l} < u \leq u^{l}_{1}$ and from equalities (\ref{phin1}) to (\ref{psin}) we obtain that
\begin{small}
\begin{eqnarray*}
\left\vert \phi^{l}(z,u) - \Psi^{l}(z,u)\right\vert & \leq & \int_{u_{0}^{l}}^{u} \left\vert \sigma \left[ 
 \phi^{l}\left(z,u^{l}_{0} \right) + (s-u^{l}_{0}) \sigma \left( \phi^{l}\left(z,u^{l}_{0} \right) \right) \right] \right. \nonumber \\ 
& - &  \left. \left[ \sigma \left( \Psi^{l}\left(z,u^{l}_{0} \right) \right) +  \sigma \sigma' \left( \Psi^{l}\left(z,u^{l}_{0} \right) \right) (s-u^{l}_{0})  \right] \right\vert  ds \nonumber \\
& = & \int_{u_{0}^{l}}^{u} \left\vert \sigma \left( z + (s-u^{l}_{0}) \sigma \left( z \right) \right) - \sigma(z) - \sigma'(z) (s-u^{l}_{0})\sigma(z) \right\vert ds. 
\end{eqnarray*}
\end{small}
By Taylor's theorem there exists a point $\theta \in \left( \inf \{z , z + (s-u^{l}_{0}) \sigma (z) \} , \sup  \{z , z + (s-u^{l}_{0}) \sigma (z) \} \right)$ such that 
\begin{small}
\begin{eqnarray*}
\left\vert \phi^{l}(z,u) -   \Psi^{l}(z,u)   \right\vert   & \leq & \int_{u_{0}^{l}}^{u}  { \left\vert \sigma '' ( \theta )  \right\vert \over 2 } (s-u^{l}_{0})^{2} \left\vert \sigma (z)  \right\vert^{2} ds \nonumber \\ 
& \leq & {M_{3}M_{5}^{2} \Vert B \Vert^3_{\infty} \over 6 l^3}
 \nonumber \\ 
& \leq & {M_{3}M_{5}^{2} \Vert B \Vert^3_{\infty} \over 6 l^3} 1 ( 1 + A_l) ^1.
\end{eqnarray*}
\end{small}
Now,let us consider $k \in \{2,\ldots, l\}$. Our induction assumption is that (\ref{lemadifl-1}) is true for $u \in (u_{k-1}^{l},u^{l}_{k}]$. We will thus study (\ref{lemadifl-1}) for  $u \in (u_{k}^{l} ,  u^{l}_{k+1}]$. Following equations (\ref{phin1}) to (\ref{psin}) and our induction hypothesis, we get
\begin{small}
\begin{eqnarray}
& &\left\vert \phi^{l}(z,u) - \Psi^{l}(z,u)\right\vert \nonumber \\
&\leq & \left\vert \phi^{l}(z,u^{l}_{k}) - \Psi^{l}(z,u^{l}_{k})\right\vert + \int_{u_{k}^{l}}^{u}\left\vert \sigma \left[\phi^{l}\left(z,u^{l}_{k}\right) + (s-u^{l}_{k}) \sigma \left( \phi^{l}\left(z,u^{l}_{k} \right) \right) \right] \right. \nonumber \\ 
 & - & \left. \left[ \sigma \left( \Psi^{l}\left(z,u^{l}_{k} \right) \right) +  \sigma' \left( \Psi^{l}\left(z,u^{l}_{k} \right) \right) (s-u^{l}_{k})\sigma \left( \Psi^{l}\left(z,u^{l}_{k} \right) \right)  \right] \right\vert  ds \nonumber \\ 
& \leq & {M_{3} M_{5}^{2} \Vert B \Vert^{3}_{\infty}  \over 6 l^3}   k \left( 1 + A_l \right) ^k  \nonumber \\ 
& + & \int_{u_{k}^{l}}^{u} \left\vert \sigma \left[ 
 \phi^{l}\left(z,u^{l}_{k} \right) + (s-u^{l}_{k}) \sigma \left( \phi^{l}\left(z,u^{l}_{k} \right) \right) \right]  - \sigma \left[ 
 \Psi^{l}\left(z,u^{l}_{k} \right)  + (s-u^{l}_{k}) \sigma \left( \Psi^{l}\left(z,u^{l}_{k} \right) \right) \right] \right\vert ds \nonumber \\ 
& + &  \int_{u_{k}^{l}}^{u} \left\vert \sigma \left[ 
 \Psi^{l}\left(z,u^{l}_{k} \right)  + (s-u^{l}_{k}) \sigma \left( \Psi^{l}\left(z,u^{l}_{k} \right) \right) \right] -   \left[ \sigma \left( \Psi^{l}\left(z,u^{l}_{k} \right) \right) +  \sigma \sigma' \left( \Psi^{l}\left(z,u^{l}_{k} \right) \right) (s-u^{l}_{k})  \right] \right\vert  ds \nonumber \\
&= & {M_{3} M_{5}^{2} \Vert B \Vert_{\infty}^3 \over 6 l^3}   k \left( 1 + A_l \right) ^k  + \bold{J}_1^k +\bold{J}_2^k  \nonumber
\end{eqnarray} 
 
 From the Lipschitz condition on $\sigma$, and our induction hypothesis
\begin{eqnarray}
\bold{J}_1^k & \leq & M_2 \int_{u_{k}^{l}}^{u} \left\vert \phi^{l}\left(z,u^{l}_{k} \right) -  \Psi^{l}\left(z,u^{l}_{k} \right) \right\vert ds  + M_2^2 \int_{u_{k}^{l}}^{u} (s-u_k^l) \left\vert \phi^{l}\left(z,u^{l}_{k} \right) -  \Psi^{l}\left(z,u^{l}_{k} \right) \right\vert ds \nonumber \\
& \leq & {M_2 M_{3} M_{5}^{2} \Vert B \Vert_{\infty}^3  \over 6 l^3}  k \left( 1 + A_l \right) ^k (u - u^{l}_{k}) +  {M_2^2 M_{3} M_{5}^{2} \Vert B \Vert_{\infty}^3  \over 12 l^3}  k \left( 1 + A_l\right) ^k (u - u^{l}_{k})^2.  \nonumber
\end{eqnarray} 
By Taylor's theorem there exists a point 
\begin{equation*}
\theta_k \in \left( \inf \{ \Psi^{l}(z,u^{l}_{k}), \Psi^{l}(z,u^{l}_{k}) + (s-u^{l}_{k}) \sigma (\Psi^{l}(z,u^{l}_{k})) \} , \sup \{ \Psi^{l}(z,u^{l}_{k}), \Psi^{l}(z,u^{l}_{k}) + (s-u^{l}_{k}) \sigma (\Psi^{l}(z,u^{l}_{k})) \}  \right) 
\end{equation*}
such that 
\begin{eqnarray}
\bold{J}_2^k & \leq &  \int_{u_{k}^{l}}^{u} {\vert \sigma''(\theta_k) \vert  \over 2}  \left\vert \sigma \left( \Psi^{l} \left(z,u^{l}_{k} \right) \right) \right\vert ^2 (s-u^{l}_{k})^{2} ds \nonumber \leq  { M_3 M_5^2 \over 6} (u - u^{l}_{k} )^3 .  \nonumber 
\end{eqnarray}
\end{small}
Therefore 
\begin{eqnarray}
& & \left\vert \phi^{l}(z,u) - \Psi^{l}(z,u)\right\vert \leq 
{M_{3} M_{5}^{2} \Vert B \Vert_{\infty}^3 \over 6 l^3}  k \left( 1 +A_l \right) ^k  +
{M_2 M_{3} M_{5}^{2} \Vert B \Vert_{\infty}^3 \over 6 l^3}  k \left( 1 + A_l \right) ^k (u - u^{l}_{k}) \nonumber \\
&+&   {M_2^2 M_{3} M_{5}^{2} \Vert B \Vert_{\infty}^3 \over 12 l^3}  k \left( 1 + A_l \right) ^k (u - u^{l}_{k})^2 +
{ M_3 M_5^2 \over 6} (u - u^{l}_{k} )^3. \nonumber 
\end{eqnarray}
Since $(u - u^{l}_{k}) \leq {\Vert B \Vert_{\infty} \over l}$ we obtain 

\begin{eqnarray}
& & \left\vert \phi^{l}(z,u) - \Psi^{l}(z,u)\right\vert \leq 
{M_{3} M_{5}^{2} \Vert B \Vert_{\infty}^3 \over 6 l^3} \left[ k\left(1 + A_l \right)^k  + {M_2 \Vert B \Vert_{\infty} \over l}  k\left(1 + A_l \right)^k+   {M_2^2 \Vert B \Vert_{\infty}^2 \over 2l^2 }k\left(1 + A_l \right)^k+ 1\right]. 
\nonumber \end{eqnarray}

Since  $ 1 < \left(1 + A_l \right)^{k+1}$, then
\begin{eqnarray}
& & \left\vert \phi^{l}(z,u) - \Psi^{l}(z,u)\right\vert \leq 
{M_{3} M_{5}^{2} \Vert B \Vert_{\infty}^3 \over 6 l^3} \left[ k\left(1 + A_l \right)^{k+1}  + 1\right] \nonumber \\
&\leq &
{M_{3} M_{5}^{2} \Vert B \Vert_{\infty}^3 \over 6 l^3} \left[ k\left(1 + A_l \right)^{k+1}  +  \left(1 + A_l \right)^{k+1}\right] = {M_{3} M_{5}^{2} \Vert B \Vert_{\infty}^3 \over 6 l^3} (k+1) \left(1 + A_l \right)^{k+1}.
\nonumber \end{eqnarray}
Thus (\ref{lemadifl-1}) holds for any $k \in \{1, \ldots ,l\}$.

Finally, we see that (\ref{lemadifl}) is satisfied. For all $(z,u) \in [-M,M] \times [-\Vert B \Vert_{\infty},\Vert B \Vert_{\infty}]$, there is some $k \in \lbrace 1,\ldots, l \rbrace$ such that 
$u^{l}_{k-1} < u \leq u^{l}_{k} $ and by (\ref{lemadifl-1}),
\begin{small}
\begin{eqnarray}
\left\vert  \phi(z,u) - \Psi^{l}(z,u)  \right\vert   & \leq &  {M_{3} M_{5}^{2} \Vert B \Vert_{\infty}^{3} \over 6 l^3} k \left(1 + A_l \right)^{k}
\nonumber \\ 
& \leq & {M_{3} M_{5}^{2} \Vert B \Vert_{\infty}^{3}  \over 6 l^2}  \left(1 + A_l \right)^{k} \nonumber \\ 
& \leq & {M_{3} M_{5}^{2} \Vert B \Vert_{\infty}^{3} \over 6 l^2}  \exp(2 M_2 \Vert B \Vert_{\infty}). \nonumber
\end{eqnarray}
\end{small}
Thus, the proof is complete.  
\end{proof}

\subsection*{Proof of Lemma \ref{lemapsin}}
\begin{proof}

We will prove by induction that, for all $k \in \{1,2, \ldots ,l\}$ and  $u \in (u_{k-1}^{l} ,  u^{l}_{k}]$, we have 
\begin{equation}\label{difpsi1}
\vert \Psi^{l}(z_{1},u) -   \Psi^{l}(z_{2},u)  \vert \leq   \vert z_{1} - z_{2} \vert \prod_{j=1}^{k}  \left[ 1 + M_{2}(u^{l}_{j}-u_{j-1}^{l})  + [M_{2}^{2} + M_{3}M_{5}]{(u^{l}_{j}-u_{j-1}^{l})^{2} \over 2 }\right].
\end{equation}

Furthermore, for all $k \in \mathbb{N}$ we have obtained a global bound
\begin{equation*}
\vert \Psi^{l}(z_{1},u) - \Psi^{l}(z_{2},u) \vert \leq    \vert z_{1} - z_{2} \vert \exp(2M_{2} \Vert B \Vert_{\infty} ).  
\end{equation*}
In a similar way as in previous lemmas, if $0=u_{0}^{l} < u \leq u^{l}_{1}$, then by equation (\ref{psin}) and  the fact that $\Psi^{l}(z,u_0^l) =z$,  we have 
\begin{equation*}
\left\vert \Psi^{l}(z_{1},u) -\Psi^{l}(z_{2},u) \right\vert \leq  \left\vert  z_{1}-z_{2} \right\vert \left[ 1 + M_2 (u_{1}^{l}-u_{0}^{l})  + (M_2^2 + M_3 M_5) {(u_{1}^{l}-u_{0}^{l})^{2} \over 2} \right].
\end{equation*}
Then for $k=1$ (\ref{difpsi1}) is satisfied.
Now, consider that (\ref{difpsi1}) is true for $k$. Then, we will prove that the inequality is true for its successor,  $k+1$. For that, we will study (\ref{difpsi1}) for $u \in (u_{k}^{l} ,  u^{l}_{k+1}]$. 

Following (\ref{psin}), Lipschitz condition and hypothesis on the second derivative of $\sigma$, we have 
\begin{eqnarray*}
& &\left\vert \Psi^{l}(z_{1},u) -   \Psi^{l}(z_{2},u)   \right\vert    \\
&\leq &
\left\vert \Psi^{l}(z_1,u^{l}_{k}) - \Psi^{l}(z_2,u^{l}_{k})\right\vert + M_2 \int_{u_{k}^{l}}^{u}\left\vert \Psi^{l}\left(z_1,u^{l}_{k}\right) -\Psi^{l}\left(z_2,u^{l}_{k}\right) \right\vert ds  \\
&+& (M_2^2 + M_3 M_5) \int_{u_{k}^{l}}^{u}\left\vert \Psi^{l}\left(z_1,u^{l}_{k}\right) -\Psi^{l}\left(z_2,u^{l}_{k}\right) \right\vert (s-u^{l}_{k}) ds \\
&=& \left\vert \Psi^{l}(z_1,u^{l}_{k}) - \Psi^{l}(z_2,u^{l}_{k})\right\vert  
\left( 1+M_2 (u-u^{l}_{k}) + (M_2^2 + M_3 M_5) { (u-u^{l}_{k})^2 \over 2} \right).
\end{eqnarray*}
Consequently, from our induction hypothesis, we get
\begin{eqnarray*}
\left\vert \Psi^{l}(z_{1},u) -   \Psi^{l}(z_{2},u)   \right\vert    & \leq & \left\vert  z_{1}-z_{2} \right\vert \prod_{j=1}^{k}   \left[ 1 + M_{2}(u^{l}_{j}-u_{j-1}^{l})  + [M_{2}^{2} + M_{3}M_{5}]{(u^{l}_{j}-u_{j-1}^{l})^{2} \over 2 }\right] \\
&\times&  \left( 1 + M_{2}(u^{l}_{k+1}-u_{k}^{l})  + [M_{2}^{2} + M_{3}M_{5}]{(u^{l}_{k+1}-u_{k}^{l})^{2} \over 2 }\right)  \\
&= &  \left\vert  z_{1}-z_{2} \right\vert \prod_{j=1}^{k+1}   \left( 1 + M_{2}(u^{l}_{j}-u_{j-1}^{l})  + [M_{2}^{2} + M_{3}M_{5}]{(u^{l}_{j}-u_{j-1}^{l})^{2} \over 2 }\right),
\end{eqnarray*}
which implies that (\ref{difpsi1}) is satisfied.

Now, for all $(z,u) \in [-M,M] \times [- \Vert B \Vert_{\infty},\Vert B \Vert_{\infty}]$, there is some $k \in \lbrace 1,\ldots, l \rbrace$ such that 
$u^{l}_{k-1} < u \leq u^{l}_{k} $ and from (\ref{difpsi1})
\begin{eqnarray*}
\left\vert \Psi^{l}(z_{1},u) -   \Psi^{l}(z_{2},u)   \right\vert    & \leq &  \left\vert  z_{1}-z_{2} \right\vert \prod_{j=1}^{k}   \left( 1 + M_{2}(u^{l}_{j}-u_{j-1}^{l})  + [M_{2}^{2} + M_{3}M_{5}]{(u^{l}_{j}-u_{j-1}^{l})^{2} \over 2 }\right) \\
&\leq & \left\vert  z_{1}-z_{2} \right\vert \left[ 1 + {2M_{2} \Vert B \Vert_{\infty}  \over l} \right] ^{l} \leq \left\vert  z_{1}-z_{2} \right\vert \exp (2M_{2} \Vert B \Vert_{\infty} ),
\end{eqnarray*}
 where the last inequality is due  by the fact that for  $l$ large enough ${M_2 \Vert B \Vert_{\infty} + M_3M_5 \Vert B \Vert_{\infty} /M_2 \over 2 l} < 1$ and $\left( 1 + {2M_2 \Vert B \Vert_{\infty} \over l} \right)^l < \exp(2 M_2 \Vert B \Vert_{\infty})$. Thus the proof is complete.

\end{proof}
\subsection*{Proof of Lemma \ref{difphin1}}

\begin{proof} 
We will prove by induction that, for all $k \in \{1,\ldots, l\}$ and  $u \in (u_{k-1}^{l} ,  u^{l}_{k}]$, we have 
\begin{small}
\begin{equation}\label{difphi1}
 \vert \phi^{l}(z_{1},u) - \phi^{l}(z_{2},u)  \vert  \leq   \vert z_{1} - z_{2} \vert \prod_{j=1}^{k}  \left[ 1 + M_{2}(u^{l}_{j}-u_{j-1}^{l})  + M_{2}^{2} {(u^{l}_{j}-u_{j-1}^{l})^{2} \over 2 }\right].
\end{equation}
\end{small}
As a consequence, for all $k \in \mathbb{N}$,
\begin{equation*}
 \vert \phi^{l}(z_{1},u) - \phi^{l}(z_{2},u)  \vert 
\leq   \vert z_{1} - z_{2} \vert \exp \left( 2M_{2} \Vert B \Vert_{\infty} \right), 
\end{equation*}
is true. 

If $0=u_{0}^{l} < u \leq u^{l}_{1}$, then by equations (\ref{phi0}) and (\ref{phin1}) and  the fact that $\phi^{l}(z,u_{0}^{l}) =z$, we have 
\begin{small}
\begin{equation*}
 \vert \phi^{l}(z_{1},u) - \phi^{l}(z_{2},u)  \vert  \leq   \vert z_{1} - z_{2} \vert   \left[ 1 + M_{2}(u^{l}_{1}-u_{0}^{l})  + M_{2}^{2} {(u^{l}_{1}-u_{0}^{l})^{2} \over 2 }\right].
\end{equation*}
\end{small}
Therefore for $k=1$  (\ref{difphi1}) is  satisfied. Now, let (\ref{difphi1}) be true until $k$. Therefore, it remains  to prove that this inequality is true for its successor, $k+1$. For that, we choose   $u \in (u_{k}^{l} ,  u^{l}_{k+1}]$. 

Using (\ref{phin1}) and Lipschitz condition on $\sigma$ again, we can write  
\begin{eqnarray*}
& &\left\vert \phi^{l}(z_{1},u) -\phi^{l}(z_{2},u)   \right\vert    \\
&\leq &
\left\vert \phi^{l}(z_1,u^{l}_{k}) - \phi^{l}(z_2,u^{l}_{k})\right\vert + M_2 \int_{u_{k}^{l}}^{u}\left\vert \phi^{l}\left(z_1,u^{l}_{k}\right) -\phi^{l}\left(z_2,u^{l}_{k}\right) \right\vert ds  \\
&+& M_2^2 \int_{u_{k}^{l}}^{u}\left\vert \phi^{l}\left(z_1,u^{l}_{k}\right) -\phi^{l}\left(z_2,u^{l}_{k}\right) \right\vert (s-u^{l}_{k}) ds \\
&=& \left\vert \phi^{l}(z_1,u^{l}_{k}) - \phi^{l}(z_2,u^{l}_{k})\right\vert  
\left( 1+M_2 (u-u^{l}_{k}) + M_2^2 { (u-u^{l}_{k})^2 \over 2} \right).
\end{eqnarray*}

Our induction hypothesis leads us to 
\begin{eqnarray*}
\vert \phi^{l}(z_{1},u) - \phi^{l}(z_{2},u) \vert & \leq & \left\vert  z_{1}-z_{2} \right\vert \prod_{j=1}^{k}   \left[ 1 + M_{2}(u^{l}_{j}-u_{j-1}^{l})  + M_{2}^{2}{(u^{l}_{j}-u_{j-1}^{l})^{2} \over 2 }\right] \\
&\times&  \left[ 1 + M_{2}(u^{l}_{k+1}-u_{k}^{l})  + M_{2}^{2} {(u^{l}_{k+1}-u_{k}^{l})^{2} \over 2 }\right]  \\
&= &  \left\vert  z_{1}-z_{2} \right\vert \prod_{j=1}^{k+1}   \left[ 1 + M_{2}(u^{l}_{j}-u_{j-1}^{l})  + M_{2}^{2} {(u^{l}_{j}-u_{j-1}^{l})^{2} \over 2 }\right].
\end{eqnarray*}
Therefore, (\ref{difphi1}) for any $k \in \{ 1, \ldots , l \}$.
 
 Now, for all $u \in [- \Vert B \Vert_{\infty} ,\Vert B \Vert_{\infty}]$, there is some $k \in \lbrace 1,\ldots, l \rbrace$ such that 
$u^{l}_{k-1} < u \leq u^{l}_{k} $ and, by (\ref{difphi1}),
\begin{eqnarray*}
\left\vert \phi^{l}(z_{1},u) -   \phi^{l}(z_{2},u)   \right\vert    & \leq &  \left\vert  z_{1}-z_{2} \right\vert \prod_{j=1}^{k}   \left( 1 + M_{2}(u^{l}_{j}-u_{j-1}^{l})  + M_{2}^{2}{(u^{l}_{j}-u_{j-1}^{l})^{2} \over 2 }\right) \\
&\leq & \left\vert  z_{1}-z_{2} \right\vert \left[ 1 + {2M_{2}\Vert B \Vert_{\infty} \over l} \right] ^{l} \leq \left\vert  z_{1}-z_{2} \right\vert \exp (2M_{2}\Vert B \Vert_{\infty}),
\end{eqnarray*}
 where the last inequality is due  by the fact that for  $l$ large enough ${M_2 \over l}  < 1$ and $\left( 1 + {2M_2 \Vert B \Vert_{\infty} \over l} \right)^l < \exp(2 M_2 \Vert B \Vert_{\infty})$. Therefore (\ref{difphin1-1}) is satisfied and the proof is complete.
\end{proof}
\subsection*{Proof of Lemma \ref{lemayyn}}
\begin{proof}
By equations (\ref{y2}) and  (\ref{yl}), we have, for $t \in [0,T]$, 
\begin{small}
\begin{eqnarray*}
\left\vert Y_{t} - Y_{t}^{n} \right\vert & \leq & \int_{0}^{t} \left\vert \exp \left( -\int_{0}^{B_{s}} \sigma' \left( \phi(Y_{s} ,u) \right) du \right) b \left( \phi(Y_{s} ,B_{s})  \right) \right. \nonumber \\
& - & \left. \exp \left( -\int_{0}^{B_{s}} \sigma' ( \Psi^{n}(Y^{n}_{s} ,u) ) du \right) b \left( \Psi^{n}(Y^{n}_{s} ,B_{s})  \right) \right\vert ds \nonumber \\
& \leq & \bold{K}_{1} + \bold{K}_{2},
\end{eqnarray*}
\end{small}
with
\begin{small}
\begin{equation}
\bold{K}_{1} = \int_{0}^{t} \left\vert \exp \left( -\int_{0}^{B_{s}} \sigma' \left( \phi(Y_{s} ,u) \right) du \right) \right\vert \left\vert  b \left( \phi(Y_{s} ,B_{s})  \right) -  b \left( \Psi^{n}(Y^{n}_{s} ,B_{s})  \right) \right\vert   ds, \nonumber 
\end{equation}
\end{small}
and
\begin{small} 
\begin{equation}
\bold{K}_{2} = \int_{0}^{t} \left\vert \exp \left( -\int_{0}^{B_{s}} \sigma' \left( \phi(Y_{s} ,u) \right) du \right)  -  \exp \left( -\int_{0}^{B_{s}} \sigma' \left( \Psi^{n}(Y^{n}_{s} ,u) \right) du \right)\right\vert \left\vert b \left( \Psi^{n}(Y^{n}_{s} ,B_{s})  \right) \right\vert ds. \nonumber 
\end{equation}
\end{small}
Therefore by (\ref{eq:phi}), the Lipschitz properties on $b$ and $\sigma$, and Lemmas \ref{lemaphi1} and \ref{lemapsi1}  we obtain
\begin{small}
\begin{eqnarray}
 &&\bold{K}_{1} \leq  M_{4} \exp \left( M_2\Vert B \Vert_{\infty}  \right) \int_{0}^{t} \vert \phi(Y_{s} , B_{s}) - \Psi^{n}(Y^{n}_{s},B_{s}) \vert ds \nonumber \\
& \leq & M_{4}  \exp \left( M_2 \Vert B \Vert_{\infty}  \right) \left( \int_{0}^{t} \vert \phi(Y_{s} , B_{s}) - \phi(Y^{n}_{s},B_{s}) \vert ds 
+  \int_{0}^{t} \vert \phi(Y^{n}_{s} , B_{s}) - \phi^{n}(Y^{n}_{s},B_{s}) \vert ds \right. \nonumber \\
& + & \left.  \int_{0}^{t} \vert \phi^{n}(Y^{n}_{s} , B_{s}) - \Psi^{n}(Y^{n}_{s},B_{s}) \vert ds  \right)\nonumber \\
&\leq & M_{4} \exp \left(  M_2\Vert B \Vert_{\infty}  \right) \left(  \int_{0}^{t} \exp(M_2 \Vert B \Vert_{\infty})  \vert Y_{s} - Y^{n}_{s}|ds \right. \nonumber \\
 &&  \hspace{3cm} +\left. {T M_2^2 M_5 \Vert B \Vert^{3}_{\infty} \over 6n^2} \exp(M_2 \Vert B \Vert_{\infty})  + {T M_3 M_5^2  \Vert B \Vert^{3}_{\infty} \over 6n^2} \exp(2M_2 \Vert B \Vert_{\infty}) \right). \nonumber
\end{eqnarray}
\end{small}
Now, by the mean value theorem, we get
\begin{eqnarray*}
\bold{K}_2 & \leq & M_{1}M_{3} \exp \left( M_{2} \Vert B \Vert_{\infty}  \right)\int_{0}^{t} \int_{0}^{\vert B_{s} \vert } \vert \phi(Y_{s},u) - \Psi^{n}(Y^{n}_{s},u) \vert du ds.
\end{eqnarray*}
Hence, proceeding as in $\bold{K}_{1}$, we obtain 
\begin{small}
\begin{eqnarray*}
\bold{K}_{2} & \leq & M_{1}M_{3} \exp \left( M_{2} \Vert B \Vert_{\infty}  \right) \Vert B \Vert_{\infty} \left( \int_{0}^{t}  \exp(M_2 \Vert B \Vert _\infty ) \vert Y_{s} - Y^{n}_{s}| ds \right.\nonumber \\
& +& \left.  {T M_2^2 M_5 \Vert B \Vert_{\infty}^{3} \over 6n^2} \exp(M_2 \Vert B \Vert_{\infty})  + {TM_3 M_5^2  \Vert B \Vert_{\infty}^{3} \over 6n^2} \exp(2M_2 \Vert B \Vert_{\infty}) \right).
\end{eqnarray*}
\end{small}
Taking into account the inequalities for  $\bold{K}_{1}$ and  $\bold{K}_{2}$, we have 
\begin{eqnarray*}
\left\vert Y_{t} - Y_{t}^{n} \right\vert & \leq & C_{1} \int_{0}^{t}  \vert Y_{s}- Y^{n}_{s} \vert  ds  + {C_{2}  \over n^{2}}, 
\end{eqnarray*}
\begin{equation*}
C_{1} =  (M_{4} +  M_{1}M_{3}  \Vert B \Vert_{\infty} )  \exp(2 M_2 \Vert B \Vert_{\infty})
\end{equation*}

and 
\begin{eqnarray*}
C_{2} &=&  \exp( M_2 \vert B \Vert_{\infty})  (M_{4} +  M_{1}M_{3}\Vert B \Vert_{\infty}) \\
& \times & \left({T M_2^2 M_5 \Vert B \Vert_{\infty}^{3} \over 6} \exp(M_2 \Vert B \Vert_{\infty})  + {T M_3 M_5^2  \Vert B \Vert_{\infty}^3 \over 6} \exp(2M_2 \Vert B \Vert_{\infty}) \right)
\end{eqnarray*}
Finally, the desired result is achieved by  direct application of the Gronwall lemma.
\end{proof}

\subsection*{Proof of Lemma \ref{difynn}}

\begin{proof}
Recall that 
$h_{1}^{n}(z,u) = {\partial g^{n}(z,u) \over \partial z}$. Then, by equations (\ref{yl}) to (\ref{ynn}) we obtain that 
\begin{small}
\begin{eqnarray*}
\vert Y^{n,n}_{s} - Y^{n,n}_{t^{n}_{k}} \vert  & \leq &  \int_{t_{k}^{n}}^{s} \left\vert g^{n}\left( B_{t^{n}_{k}} , Y_{t^{n}_{k}}^{n,n} \right) + h_{1}^{n}\left(	B_{t^{n}_{k}} , Y_{t^{n}_{k}}^{n,n} 	\right) \left( B_{u} - B_{t^{n}_{k}}  \right) \right\vert du \nonumber \\
& \leq & M_{1} \exp \left( M_{2} \Vert B \Vert_{\infty} \right) (s-t_{k}^{n}) + \left\vert  h_{1}^{n}\left(	B_{t^{n}_{k}} , Y_{t^{n}_{k}}^{n,n} 	\right) \right\vert  \int_{t_{k}^{n}}^{s} \left\vert  B_{u} - B_{t^{n}_{k}}   \right\vert du \nonumber \\
& \leq & M_{1} \exp \left( M_{2} \Vert B \Vert_{\infty} \right) (s-t_{k}^{n}) + \Vert B \Vert_{H- \rho} C_{3}(s-t_{k}^{n})^{1+H-\rho} \nonumber \\
& \leq & C_{4}(s-t_{k}^{n}),
\end{eqnarray*}
\end{small}
where
\begin{small}
\begin{equation*} 
\left\vert  h_{1}^{n}\left(	B_{t^{n}_{k}} , Y_{t^{n}_{k}}^{n,n} 	\right) \right\vert  \leq    M_{1}M_{2} \exp \left(M_{2}\Vert B \Vert_{\infty} \right) +  M_{4} \exp \left(M_{2} \Vert B \Vert_{\infty} \right) M_{5}
\Vert B \Vert_{\infty}\left( 1 +M_2  \right) = C_{3},
\end{equation*}
and 
\begin{equation*}\label{C4}
C_4 =  M_{1} \exp \left( M_{2} \Vert B \Vert_{\infty} \right) + C_3 T^{H - \rho} \Vert B \Vert_{H-\rho}. 
\end{equation*}
\end{small}
The specific computation of the bound of the term $h_{1}^{n}(z,u)$ is left in the Appendix (Section \ref{apen}).
\end{proof}
\subsection*{Proof of Lemma \ref{difynynn} }
\begin{proof}
Let $n \in \mathbb{N}$ be fixed. We will prove Lemma \ref{difynynn} by induction on $k$ again. That is, for every $k \in \{1, \ldots,n \}$  and $t \in  (t_{k-1}^{n} , t_{k}^{n}]$, we have
\begin{equation}\label{dify1}
\left\vert Y^{n}_{t} - Y_{t}^{n,n} \right\vert  \leq  C_{6} \sum_{j=1}^{k} (t_{j}^n -t_{j-1}^n)^{1+2(H-\rho)} \exp(C_{7}(t^n_{k} - t^n_{j-1} )),
\end{equation}
here $0<\rho<H$. As a consequence, for all $k \in \{1, \ldots ,n\}$ we obtained the global bound  
\begin{equation*}
\left\vert Y^{n}_{t} - Y_{t}^{n,n} \right\vert  \leq  { C_{6} T^{1+2(H-\rho)}  \exp(C_{7}T) \over  n^{2(H-\rho)} },
\end{equation*}
where $C_{6}$ and $C_{7}$  are given in  (\ref{c6})  and (\ref{c7}), respectively.

First for $k=1$ and $t \in (t_{0}^{n}, t_{1}^{n}]$, equations (\ref{yl}) to (\ref{ynn}) imply  

\begin{small}
\begin{eqnarray}
\left\vert Y^{n}_{t} - Y_{t}^{n,n} \right\vert  & \leq & \int_{t_{0}^{n}}^{t} \left\vert g^{n}(B_{s}, Y^{n}_{s}) - \left[ g^{n}(B_{t_{0}^{n}},x) + h_{1}^{n}(B_{t_{0}^{n}},x)(B_{s} - B_{t_{0}^{n}})  \right] \right\vert ds \nonumber \\
& \leq & \bold{F}_{1} + \bold{F}_{2} + \bold{F}_{3}, \nonumber 
\end{eqnarray}
\end{small}
where 
\begin{eqnarray}
\bold{F}_{1} &=& \int_{t_{0}^{n}}^{t} \left\vert g^{n}(B_{s}, Y^{n}_{s}) - g^{n}(B_{s}, Y^{n,n}_{s})  \right\vert ds\nonumber \\
\bold{F}_{2} &=& \int_{t_{0}^{n}}^{t} \left\vert  g^{n}(B_{s},Y^{n,n}_{s})   -    g^{n}(B_{s},Y^{n,n}_{t_{0}^{n}})  \right\vert ds   \quad \mbox{and}\nonumber \\
\bold{F}_{3} &=& \int_{t_{0}^{n}}^{t} \left\vert g^{n}(B_{s}, Y^{n,n}_{t_{0}^{n}}) - \left[ g^{n}(B_{t_{0}^{n}},Y^{n,n}_{t_{0}^{n}}) + h_{1}^{n}(B_{t_{0}^{n}},Y^{n,n}_{t_{0}^{n}})(B_{s} - B_{t_{0}^{n}})  \right] \right\vert ds. \nonumber 
\end{eqnarray}
Equality (\ref{gl1}) and the triangle inequality allow us to write 
\begin{small}
\begin{eqnarray*}
\bold{F}_{1} &=& \int_{t_{0}^{n}}^{t} \left\vert \exp \left(-\int_{0}^{B_{s}} \sigma{'}(\Psi^{n}(Y_{s}^{n}, r)) dr \right) b(\Psi^{n}(Y_{s}^{n}, B_s)) \right.\\
 &-&  \left. \exp \left(-\int_{0}^{B_{s}} \sigma{'}(\Psi^{n}(Y_{s}^{n,n}, r)) dr \right) b(\Psi^{n}(Y_{s}^{n,n}, B_s)) \right\vert ds \\
& \leq & \int_{t_{0}^{n}}^{t} \left\vert \exp \left(-\int_{0}^{B_{s}} \sigma{'}(\Psi^{n}(Y_{s}^{n}, r))dr \right) \right\vert  \left\vert b(\Psi^{n}(Y_{s}^{n}, B_s)) - b(\Psi^{n}(Y_{s}^{n,n}, B_s)) \right\vert  ds \\
&+&  \int_{t_{0}^{n}}^{t}  \left\vert \exp \left(-\int_{0}^{B_{s}} \sigma{'}(\Psi^{n}(Y_{s}^{n}, r)) dr \right) -\exp \left(-\int_{0}^{B_{s}} \sigma{'}(\Psi^{n}(Y_{s}^{n,n}, r)) dr \right)   \right\vert \\
& \times & \left\vert b(\Psi^{n}(Y_{s}^{n,n}, B_s)) \right\vert ds.
\end{eqnarray*}
\end{small}
Therefore, the Lipschitz property on $b$ and the mean value theorem yield
\begin{small}
\begin{eqnarray*}
\bold{F}_{1} & \leq & M_{4}\exp(M_{2} \Vert B \Vert_{\infty})  \int_{t_{0}^{n}}^{t}   \left\vert \Psi^{n}(Y_{s}^{n}, B_s) - \Psi^{n}(Y_{s}^{n,n}, B_s) \right\vert  ds, \\
&+& M_{1}M_{3} \exp \left( M_{2} \Vert B \Vert_{\infty}  \right)\int_{0}^{t} \int_{0}^{\vert B_{s} \vert } \vert \Psi^{n}(Y_{s}^{n}, r) - \Psi^{n}(Y_{s}^{n,n}, r) \vert dr ds.
\end{eqnarray*}
\end{small}
Consequently, Lemma \ref{lemapsin}  lead us to
\begin{small}
\begin{eqnarray*}
\bold{F}_{1} & \leq & M_{4}\exp(3M_{2} \Vert B \Vert_{\infty})  \int_{t_{0}^{n}}^{t} \left\vert  Y^{n}_{s} - Y^{n,n}_{s}  \right\vert ds   \nonumber \\
&+& M_{1}M_3\exp(3M_{2} \Vert B \Vert_{\infty} ) \Vert B \Vert_{\infty}  \int_{t_{0}^{n}}^{t} \left\vert  Y^{n}_{s} - Y^{n,n}_{s}  \right\vert ds\\
&=& \exp(3M_{2} \Vert B \Vert_{\infty})  \left[ M_{4} + M_{1}M_3 \Vert B \Vert_{\infty} \right]  \int_{t_{0}^{n}}^{t} \left\vert  Y^{n}_{s} - Y^{n,n}_{s}  \right\vert ds.\\
\end{eqnarray*}
\end{small}
Proceeding similarly as in $\bold{F}_{1}$,

\begin{small}
\begin{eqnarray*}
\bold{F}_{2} & \leq &  M_{4}\exp(M_{2} \Vert B \Vert_{\infty})  \int_{t_{0}^{n}}^{t}   \left\vert \Psi^{n}(Y_{s}^{n,n}, B_s) - \Psi^{n}(Y^{n,n}_{t_{0}^{n}}, B_s) \right\vert  ds, \\
&+& M_{1}M_{3} \exp \left( M_{2} \Vert B \Vert_{\infty}  \right)\int_{t_{0}^{n}}^{t} \int_{0}^{\vert B_{s} \vert } \vert \Psi^{n}(Y_{s}^{n,n}, r) - \Psi^{n}(Y^{n,n}_{t_{0}^{n}}, r) \vert dr ds \\
&\leq & \exp(3M_{2} \Vert B \Vert_{\infty})  \left[ M_{4} + M_{1}M_3 \Vert B \Vert_{\infty} \right]\int_{t_{0}^{n}}^{t}  \vert Y_{s}^{n,n}-Y^{n,n}_{t_{0}^{n}} \vert  ds.
\end{eqnarray*}
\end{small}
Hence, using Lemma \ref{difynn}, we can establish 
$$\bold{F}_{2}  \leq  C_{4} \exp(3M_{2} \Vert B \Vert_{\infty})  \left[ M_{4} + M_{1}M_3 \Vert B \Vert_{\infty} \right] {(t-t_{0}^{n})^2 \over 2}.$$

Now, we deal with $\bold{F}_{3} $. From  Lemma \ref{derivadagl} (Section \ref{apen}),

\begin{eqnarray}\label{eq:F3delta}
\bold{F}_{3} & \leq & C_{5}    \int_{t_{0}^{n}}^{t}  (B_{s} - B_{t_{0}^{n}})^{2}   ds +  \int_{t_{0}^{n}}^{t}  \left\vert  \sum_{j=1}^{n}  1_{ \{B_{s} \in (u_{j-1}, u_{j}] \}} \sum_{k=1}^{j} (B_{s} - u_{k}^{n})  \Delta_{j+k}  g^n{'} \right\vert ds  \nonumber \\
& + & \int_{t_{0}^{n}}^{t}  \left\vert  \sum_{j=-n+1}^{0}  1_{ \{B_{s} \in (u_{j-1}, u_{j}] \}} \sum_{k=-j}^{0} (B_{s} - u_{k}^{n})  \Delta_{j+k}  g^n{'} \right\vert ds,
\end{eqnarray}
where 
\begin{equation*}
\Delta_{j+k} g^n{'} =  {\partial g^{n}(u_{j+k}+, Y_{t_{0}^{n}}^{n,n}) \over \partial u} - {\partial g^{n}(u_{j+k}-, Y_{t_{0}^{n}}^{n,n}) \over \partial u}
\end{equation*}
and
\begin{eqnarray*}
C_{5} &=&  \exp (M_{2} \Vert B \Vert_{\infty}) \left[   \Vert B \Vert_{\infty} (1+M_{2}) \left( M_{3} M_{1}M_{5}  + M_{2} M_{4} M_{5}  + M_{6}M_{5}  \Vert B \Vert_{\infty} (1+ M_{2})  \right)  \right. \\ 
&+& \left. M_{1}M_{2} + M_{4}M_{5}( 1+ M_{2}) \right].
\end{eqnarray*}
Note that (\ref{dergl}) implies 
\begin{eqnarray*}
\vert \Delta_{j+k} g^n{'} \vert  &\leq&  M_{4}   \exp(M_{2} \Vert B \Vert_{\infty}) \left\vert \frac{\partial \Psi^{n}(Y_{t_{0}^{n}}^{n,n}, u_{j+k}+)}{\partial u} -   \frac{\partial \Psi^{n}(Y_{t_{0}^{n}}^{n,n}, u_{j+k}-)}{\partial u} \right\vert  \nonumber \\
& = & M_{4}   \exp(M_{2} \Vert B \Vert_{\infty}) \left\vert  \sigma \left( \Psi^{n}(Y_{t_{0}^{n}}^{n,n},u_{j+k} )  \right)  -  \sigma \left( \Psi^{n}(Y_{t_{0}^{n}}^{n,n},u_{j+k-1} )  \right) \right.  \nonumber  \\
&-& \left. \sigma \sigma' \left( \Psi^{n}(Y_{t_{0}^{n}}^{n,n},u_{j+k-1} )  \right) \left(u_{j+k} - u_{j+k-1} \right) \right\vert \nonumber \\
& \le & M_{4} M_{2}  \exp(M_{2} \Vert B \Vert_{\infty})  \left[ \left\vert  \Psi^{n}(Y_{t_{0}^{n}}^{n,n},u_{j+k} ) - \Psi^{n}(Y_{t_{0}^{n}}^{n,n},u_{j+k-1} )  \right\vert  \right. \nonumber \\
& + & \left. M_{5}M_{2} (u_{j+k}-u_{j+k-1}) \right] \nonumber \\
& \le & M_{4} M_{2}  \exp(M_{2} \Vert B \Vert_{\infty})  \left[ (M_{5} + M_{5}M_{2}) {(u_{j} -u_{j-1})^{2} \over 2 } \right. \nonumber \\
& + & \left. M_{5}M_{2} (u_{j+k}-u_{j+k-1}) \right] \nonumber \\
& \le & M_{4} M_{2}  \exp(M_{2} \Vert B \Vert_{\infty})  (u_{j} -u_{j-1}) \left[ (M_{5} + M_{5}M_{2})  \Vert B \Vert_{\infty}   \right. \nonumber \\
& + & \left. M_{5}M_{2} \right] \nonumber \\
&=& C_{8} (u_{j} -u_{j-1}).
\end{eqnarray*}
Hence, (\ref{eq:F3delta}) implies  
\begin{equation*}
\bold{F}_{3}  \leq ( C_{5} +C_{8} )  \int_{t_{0}^{n}}^{t}  (B_{s} - B_{t_{0}^{n}})^{2}   ds \leq  ( C_{5} +C_{8} )  \Vert B \Vert_{H-\rho}  \frac{(t - t_{0}^{n})^{1+2(H-\rho)}}{2},
\end{equation*}
Since $H < 1/2$, then the previous estimations for $\bold{F}_{1}$, $\bold{F}_{2}$ and $\bold{F}_{3}$ give  
\begin{eqnarray*}
& &\left\vert Y^{n}_{t} - Y_{t}^{n,n} \right\vert   \leq \nonumber \\
& &   \exp(3 M_{2} \Vert B \Vert_{\infty} ) [M_{4} + M_{1} M_{3} \Vert B \Vert_{\infty}] \int_{t_{0}^{n}}^{t} \left\vert  Y^{n}_{s} - Y^{n,n}_{s}  \right\vert ds \\ 
& &+   \left[ C_{4} \exp(3 M_{2} \Vert B \Vert_{\infty} ) [M_{4} + M_{1}M_{3}\Vert B \Vert_{\infty} ]T^{1-2(H - \rho)}  \right. \\
&& \left.  + (C_{5} + C_{8}) \Vert B \Vert_{H-\rho} \right]  (t-t_{0}^{n})^{1 + 2(H-\rho)} \\
& & = C_{6}(t-t_{0}^{n})^{1+2(H-\rho)} + C_{7} \int_{t_{0}^{n}}^{t} \left\vert  Y^{n}_{s} - Y^{n,n}_{s}  \right\vert ds.
\end{eqnarray*}

Then by the Gronwall lemma and $t \in (t_{0}^{n} , t_{1}^{n}]$, we conclude
\begin{equation*}
\left\vert Y^{n}_{t} - Y_{t}^{n,n} \right\vert   \leq C_{6}  \exp(C_{7}(t^{n}_{1}-t_{0}^{n}) ) (t^{n}_{1}-t_{0}^{n})^{1+2(H-\rho)}.
\end{equation*}

Now we  show that (\ref{dify1}) is true for $k+1$ if it  holds for $k$. So we choose $t \in (t_{k}^{n}, t_{k+1}^{n}]$.  Towards this end, we proceed  as in the case $k=1$:
\begin{small}
\begin{eqnarray*}
\left\vert Y^{n}_{t} - Y_{t}^{n,n} \right\vert  & \leq & \left\vert Y^{n}_{t^n_{k}} - Y_{t^n_{k}}^{n,n} \right\vert  \\
& + & \left| \int_{t_{k}^{n}}^t \left( g^{n}\left(B_{s} , Y^{n}_{s}\right)-  \left[ g^{n}\left(	B_{t^{n}_{k}} , Y_{t^{n}_{k}}^{n,n} 	\right) + h_{1}^{n}\left(	B_{t^{n}_{k}} , Y_{t^{n}_{k}}^{n,n} 	\right) \left( B_{s} - B_{t^{n}_{k}}  \right) \right] \right) ds \right| \\
& \leq &  C_{6} \sum_{j=1}^{k} (t_{j}^n -t_{j-1}^n)^{1+2(H-\rho)} \exp(C_{7}(t^n_{k} - t^n_{j-1} )) + C_{6}(t_{k+1}^n - t_{k}^n )^{1 + 2(H-\rho)}  \\
&+& C_{7} \int_{t_{k}^{n}}^{t} \left\vert  Y^{n}_{s} - Y^{n,n}_{s}  \right\vert ds. 
\end{eqnarray*}
\end{small}
Therefore, using the Gronwall lemma again and $t \in (t_{k}^{n} , t_{k+1}^{n}]$
\begin{small}
\begin{eqnarray*}
& & \left\vert Y^{n}_{t} - Y_{t}^{n,n} \right\vert  \leq \\
 & \leq & \left[ C_{6} \sum_{j=1}^{k} (t_{j}^n -t_{j-1}^n)^{1+2(H-\rho)} \exp(C_{7}(t^n_{k} - t^n_{j-1} )) + C_{6}(t_{k+1}^n - t_{k}^n )^{1 + 2(H-\rho)} \right] \exp(C_{7}(t_{k+1}^n - t_{k}^n ))   \\ 
&=&   C_{6} \sum_{j=1}^{k} (t_{j}^n -t_{j-1}^n)^{1+2(H-\rho)} \exp(C_{7}(t^n_{k+1} - t^n_{j-1} )) + C_{6}(t_{k+1}^n - t_{k}^n )^{1 + 2(H-\rho)} \exp(C_{7}(t_{k+1}^n - t_{k}^n )) \\
&=&  C_{6} \sum_{j=1}^{k+1} (t_{j}^n -t_{j-1}^n)^{1+2(H-\rho)} \exp(C_{7}(t^n_{k+1} - t^n_{j-1} )). 
\end{eqnarray*}
\end{small}
Therefore (\ref{dify1}) is true for any $k \leq n$.

Finally, for all $t \in [0,T]$, there exits $k \in \lbrace 1,\ldots, n \rbrace$ such that 
$t^{n}_{k-1} < t \leq t^{n}_{k} $. Thus (\ref{dify1}) implies 
\begin{eqnarray*}
\left\vert Y^{n}_{t} - Y_{t}^{n,n} \right\vert  & \leq & C_{6} \left({T \over n} \right)^{1+2(H-\rho)} k \exp(C_{7}(t^n_{k} - t^n_{0} )) \\
& \leq & C_{6} T \left({T \over n} \right)^{2(H-\rho)}  \exp(C_{7}T),
\end{eqnarray*}
and the proof is complete.
\end{proof}
\section{Appendix}\label{apen}
Here, we consider the following useful result for the analysis of the  convergence of the scheme.

\begin{lem}\label{derivadagl}
Let $\left\lbrace  u_{i}^{l} \right\rbrace$ be a partion of the interval $[-R,R]$ given by $-R=u^{l}_{-l} < \ldots <u^{l}_{-1}< u^{l}_{0}=0< u^{l}_{1}< \ldots < u^{l}_{l} = R$ and  $f:[-R,R] \rightarrow \mathbb{R} $ a $C^{2}([u_{j}^l,u_{j+1}^l])$-function  for each $j \in \{-l, \ldots, l-1 \}$. Also let $f$ be continuous on $[-R,R]$, $C$  a constant such that 
\begin{equation*}
\sup\limits_{j \in \{-l,...,l-1 \}} \Vert f'' \Vert_{\infty , [u_{j}, u_{j+1} ]} =C< \infty,
\end{equation*}
and  $x \in (u_{j}^{l} ,  u^{l}_{j+1}]$ and $y \in (u_{j+k}^{l} ,  u^{l}_{j+k+1}]$. Then, 
\begin{equation}\label{conj}
\vert f(y)-f(x)-f'(x+)(y-x) \vert  \leq {C \over 2} (y-x)^2 + \sum_{p=1}^{k} \Delta_{j+p}f' \cdot (y-u_{j+p}),
\end{equation}
where 
\begin{equation*}
\Delta_{j+p}f' =  \vert f'(u_{j+p}+)-f'(u_{j+p}-) \vert. 
\end{equation*}
\end{lem}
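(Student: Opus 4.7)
The plan is to reduce the inequality to an integral of $f'(z) - f'(x+)$ over $[x,y]$ and then bound this integrand by combining the piecewise $C^2$ structure on each subinterval with the explicit jumps of $f'$ at the interior partition points $u_{j+1}, \ldots, u_{j+k}$.

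First, since $f$ is continuous on $[x,y]$ and $C^1$ on each subinterval of the partition, applying the fundamental theorem of calculus piecewise gives $f(y) - f(x) = \int_x^y f'(z)\,dz$ (the set of partition points is Lebesgue negligible, so $f'$ need only be defined off of it). Subtracting $f'(x+)(y-x) = \int_x^y f'(x+)\,dz$ from both sides yields
$$
f(y) - f(x) - f'(x+)(y-x) = \int_x^y \bigl[f'(z) - f'(x+)\bigr]\,dz.
$$

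Next I would establish, by induction on the index $p$ of the subinterval containing $z$, the pointwise representation
$$
f'(z) - f'(x+) = \int_x^z f''(w)\,dw + \sum_{q=1}^{p}\bigl[f'(u_{j+q}+) - f'(u_{j+q}-)\bigr],
$$
valid for $z \in (u_{j+p}, u_{j+p+1})$ and each $p \in \{0, 1, \ldots, k\}$, with the empty sum when $p = 0$ and the convention $u_{j+k+1} := u^l_{j+k+1}$. Within a single smooth subinterval the formula is immediate from $f \in C^2$ and FTC; each time $z$ crosses a partition point $u_{j+q}$ one picks up exactly the jump $f'(u_{j+q}+) - f'(u_{j+q}-)$, while the integrals over consecutive smooth pieces glue together into a single integral from $x$ to $z$.

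Taking absolute values and using $|f''(w)| \leq C$ almost everywhere gives
$$
|f'(z) - f'(x+)| \leq C(z - x) + \sum_{q \,:\, u_{j+q} < z} \Delta_{j+q} f'.
$$
Integrating from $x$ to $y$, the smooth part contributes $\int_x^y C(z-x)\,dz = \tfrac{C}{2}(y-x)^2$, and swapping the order of summation and integration in the jump part yields
$$
\int_x^y \sum_{q \,:\, u_{j+q} < z} \Delta_{j+q} f'\,dz = \sum_{q=1}^{k} \Delta_{j+q} f' \int_x^y \mathbf{1}_{\{z > u_{j+q}\}}\,dz = \sum_{q=1}^{k} \Delta_{j+q} f' \cdot (y - u_{j+q}),
$$
which is exactly the second term on the right-hand side of (\ref{conj}). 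The only real obstacle is the bookkeeping of one-sided limits: using the right derivative $f'(x+)$ makes the telescoping clean, but one must separately check the edge cases $x = u^l_{j+1}$ or $y = u^l_{j+k+1}$, in which a boundary smooth subinterval degenerates; in each such case the argument is unchanged because the corresponding integral simply vanishes.
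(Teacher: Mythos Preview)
Your argument is correct and takes a genuinely different route from the paper. The paper proves (\ref{conj}) by induction on $k$ at the level of the full inequality: it splits $f(y)-f(x)-f'(x+)(y-x)$ at the rightmost partition point $u_{j+k}$ (base case $u_{j+1}$), applies the one-interval Taylor remainder to each piece, and then algebraically reassembles the pieces so that the three quadratic terms $\tfrac{C}{2}(y-u_{j+k})^2$, $\tfrac{C}{2}(u_{j+k}-x)^2$, $C(u_{j+k}-x)(y-u_{j+k})$ collapse to $\tfrac{C}{2}(y-x)^2$. You instead work one derivative down: you write the main expression as $\int_x^y[f'(z)-f'(x+)]\,dz$, produce an \emph{exact} telescoping decomposition of the integrand into $\int_x^z f''$ plus the finitely many jumps of $f'$ already crossed, bound pointwise, and integrate back; the swap of sum and integral immediately yields the factors $(y-u_{j+p})$. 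Your approach is cleaner and makes the structure of the error (integrated second derivative plus weighted jump contributions) transparent, at the cost of invoking absolute continuity of $f$ and the exact integral identity for $f'(z)-f'(x+)$; the paper's approach stays entirely at the level of finite Taylor inequalities and never writes down an integral, which is slightly more elementary but hides the telescoping inside the induction.
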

\begin{proof}
We will prove that (\ref{conj}) holds via induction on $k$.  

 We start our induction with $k=1$. That is, we consider two consecutive intervals.  If $x \in (u_{j},u_{j+1}]$ and $y \in (u_{j+1},u_{j+2}]$. Then, 

\begin{eqnarray*}\lefteqn{
\vert f(y)-f(x)-f'(x+)(y-x) \vert} \\
& \leq & \vert f(y)-f(u_{j+1}) - f'(u_{j+1}+)(y-u_{j+1})\vert \\
&+& \vert f(u_{j+1}) + f'(u_{j+1}+)(y-u_{j+1}) - f(x) - f'(x+)(y-x) \vert \\
& \leq & {C \over 2} (y- u_{j+1})^2 + \vert f(u_{j+1})-f(x) - f'(x+)(u_{j+1}-x)\vert \\
&+& \vert f'(u_{j+1}+)-f'(x+) \vert (y-u_{j+1}) \\
& \leq & {C \over 2} (y- u_{j+1})^2 + {C \over 2} (u_{j+1}-x)^2 \\
&+& \left[ \vert f'(u_{j+1}+)-f'(u_{j+1}-)  \vert + \vert f'(u_{j+1}-)-f'(x+)  \vert  \right] (y-u_{j+1})\\
&=& {C \over 2} (y- u_{j+1})^2 + {C \over 2} (u_{j+1}-x)^2  + (y-u_{j+1}) (\Delta_{j+1}f') \\
&+& C(u_{j+1} -x )(y-u_{j+1})\\
&=& {C \over 2} (y- x)^2  + (\Delta_{j+1}f')(y-u_{j+1}).
\end{eqnarray*}
It means, (\ref{conj}) holds for $k=1$. 

It remains  to prove that the inequality (\ref{conj}) is true for its successor, $k+1$ assuming that until $k$ is satisfied. To do so, choose
 $x \in [u_{j},u_{j+1}]$ and $y \in [u_{j+k+1},u_{j+k+2}]$. Then,
\begin{eqnarray*}\lefteqn{
\vert f(y)-f(x)-f'(x+)(y-x) \vert } \\
& \leq &  \vert f(y)-f(u_{j+k+1}) - f'(u_{j+k+1}+)(y-u_{j+k+1})\vert \\
&+& \vert f(u_{j+k+1}) + f'(u_{j+k+1}+)(y-u_{j+k+1}) - f(x) - f'(x+)(y-x) \vert \\
& \leq & {C \over 2} (y- u_{j+k+1})^2 + \vert f(u_{j+k+1})-f(x) - f'(x+)(u_{j+k+1}-x)\vert \\
&+& \vert f'(u_{j+k+1}+)-f'(x+) \vert (y-u_{j+k+1}).
\end{eqnarray*}
Hence, our induction hypothesis implies 
\begin{eqnarray*}\lefteqn{
\vert f(y)-f(x)-f'(x+)(x-y) \vert} \\
& \leq & {C \over 2} (y- u_{j+k+1})^2 + {C \over 2} (u_{j+k+1}-x)^2 +  \sum_{p=1}^{k} (u_{j+k+1} -u_{j+p})  \Delta_{j+p}f'  \\
& + & \left[  \Delta_{j+k+1}f' + C (u_{j+k+1} - u_{j+k}) +  \left\vert f'(u_{j+k}+) - f'(x+) \right\vert  \right] (y-u_{j+k+1}) \\
& \leq & {C \over 2} (y- u_{j+k+1})^2 + {C \over 2} (u_{j+k+1}-x)^2 +  \sum_{p=1}^{k} (u_{j+k+1} -u_{j+p})  \Delta_{j+p}f'  \\
&+&  \left[  \Delta_{j+k+1}f' + C (u_{j+k+1} - u_{j+k}) \phantom{  \sum_{p=1}^{k} }  \right.  \\
&+& \left.  \sum_{p=1}^{k}  \Delta_{j+p} f'  + C(u_{j+k} - x) \right] (y-u_{j+k+1}) \\
& \leq &  \frac{C}{2}(y-x)^{2} +  \sum_{p=1}^{k} (y -u_{j+p})  \Delta_{j+p}f' + (y-u_{j+k+1}) \Delta_{j+k+1}f'.  
\end{eqnarray*}
Therefore, (\ref{conj}) is satisfied for $k+1$ and the proof is complete.
\end{proof}

{\bf Acknowledgments} 
We would like to thank anonymous referee for useful comments and suggestions that improved the presentation of the paper.
Part of this work was done while Jorge A. Le\'on was visiting CIMFAV, Chile,
 and H\'ector Araya and Soledad Torres were visiting Cinvestav-IPN, Mexico. 
The authors thank both Institutions for  their hospitality and economical support. Jorge A. Le\'on was partially supported by the CONACYT grant 220303. Soledad Torres was partially supported by Proyecto ECOS C15E05; Fondecyt 1171335, REDES 150038 and  Mathamsud 16MATH03. H\'ector Araya was partially supported by  Beca 
CONICYT-PCHA/Doctorado Nacional/2016-21160138; Proyecto ECOS C15E05,  REDES 150038 and  Mathamsud 16MATH03.


\end{document}